\newtheorem{theorem}{Theorem}
\newtheorem{lemma}[theorem]{Lemma}
\newtheorem{conj}[theorem]{Conjecture}
\newenvironment{proof*}{\noindent {\it Proof.~~}\ }{}
\newcommand{\beeq}{\begin{eqnarray*}}
\newcommand{\eneq}{\end{eqnarray*}}
\def\Z{\mathbb Z}
\def\zp{{\mathbb Z}/p{\mathbb Z}}
\newcommand{\ot}{\overline{T}}
\newcommand{\be}{\begin{equation}}
\newcommand{\ee}{\end{equation}}
\title[Large sets with small doubling modulo $p$]{Large
sets with small doubling modulo $p$ are well covered by an arithmetic progression}
\author{Oriol Serra}
\address{Universitat Polit\`ecnica de Catalunya\\
Matem\`atica Aplicada IV\\
Campus Nord - Edif. C3, C. Jordi Girona, 1-3\\
08034 Barcelona, Spain.}
\email{oserra@ma4.upc.edu}
\author{Gilles Z\'emor}
\address{Institut de Math\'ematiques de Bordeaux\\
Universit\'e de Bordeaux 1\\
351 cours de la Lib\'eration, 33405 Talence \\
France.}
\email{zemor@math.u-bordeaux1.fr}
\begin{document}
\baselineskip 16pt

\begin{abstract} We prove that there is  $\epsilon>0$ and $p_0>0$
such that for every   prime $p>p_0$, every subset $S$ of $\Z/p\Z$
which satisfies $|2S|\le (2+\epsilon)|S|$ and $2(|2S|)-2|S|+3\le p$ is
contained in an arithmetic progression of length $|2S|-|S|+1$. This
is the first result of this nature which places no unnecessary
restrictions on the size of $S$.
\end{abstract}

%MSC Class: 11P70

\maketitle

\section{Introduction}

In 1959 Freiman \cite{fre59} proved that if $S$ is a set of integers such
that
$$|2S|\le 3|S|-4$$
then $S$ is contained in an arithmetic progression of length
$|2S|-|S| +1$.

This result is often known as Freiman's $(3k-4)$ Theorem. It
has been conjectured that the same result also holds in the finite
groups $\Z/p\Z$ of prime order.  Working towards this conjecture,
Freiman \cite{fre59} proved~(see Nathanson \cite{Nat} for the
following formulation of the result):

\begin{theorem}[Freiman \cite{fre59}]\label{thm:fv} Let $S\subset
\Z/p\Z$ such that $3\le |S|\le c_0p$ and
$$
|2S|\le c_1 |S|-3,
$$
with $0<c_0\le 1/12$, $c_1>2$ and $(2c_1-3)/3<(1-c_0c_1)/c_1^{1/2}$.
Then $S$ is contained in an arithmetic progression of length
$|2S|-|S|+1$.
\end{theorem}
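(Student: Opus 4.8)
The plan is to deduce Theorem~\ref{thm:fv} from Freiman's $(3k-4)$ theorem over $\Z$ by a rectification argument. Since both hypothesis and conclusion are invariant under translations and under the dilations $x\mapsto tx$, $t\in(\Z/p\Z)^*$, I would first normalise $S$ so that $0\in S$. Put $k=|S|$ and $m=|2S|$, so that $m\le c_1k-3$ and $k\le c_0p$; by the Cauchy--Davenport inequality $m\ge 2k-1$ (valid since $2k-1\le 2c_0p<p$), so we may assume $k\ge 2/(c_1-2)$, as otherwise no such $S$ exists at all.

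The heart of the argument is a confinement step: \emph{there is $t\ne 0$ such that, after rotating the circle appropriately, $tS$ lies in an interval of $\le (p+1)/2$ consecutive residues}, i.e. $\max(tS)-\min(tS)\le (p-1)/2$ once $tS$ is read inside that interval. Granting this, the reduction $\Z\to\Z/p\Z$ is injective on the block of $2(\max(tS)-\min(tS))+1\le p$ consecutive integers containing $2(tS)$, so $2(tS)$ has cardinality $m$ whether computed in $\Z$ or in $\Z/p\Z$, and $s\mapsto (ts\bmod p)$ — read in that block — is a Freiman $2$-isomorphism from $S$ onto a set $S'\subset\Z$ with $|S'|=k$ and $|S'+S'|=m$. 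As $m\le c_1k-3<3k-3$ and these are integers, $|S'+S'|\le 3|S'|-4$, so Freiman's $(3k-4)$ theorem \cite{fre59} places $S'$ in an arithmetic progression $\{a,a+q,\dots,a+(L-1)q\}\subset\Z$ with $q\ge 1$, $L\le |S'+S'|-|S'|+1=m-k+1$, and (after shrinking) $a=\min S'$, $a+(L-1)q\le\max S'$, whence $1\le q\le (p-1)/2<p$. Multiplying by $t^{-1}$ and reducing mod $p$ turns this into an arithmetic progression of $\Z/p\Z$ with nonzero common difference $t^{-1}q$ and $L\le m-k+1$ terms; these are pairwise distinct because $L\le m-k+1<2k\le 2c_0p<p$, and they contain $S$. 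Lengthening the progression if necessary yields one of length exactly $|2S|-|S|+1\ (<p)$ covering $S$, as required.

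The main obstacle is the confinement step, and it is exactly here that $k\le c_0p$ and the inequality $(2c_1-3)/3<(1-c_0c_1)/c_1^{1/2}$ are used; for $S$ as large as $c_0p$ this is a genuinely delicate estimate, and it is the reason the present paper is needed to remove the size restriction. The mechanism is that small doubling cannot coexist with $S$ being ``full-dimensional in every direction'': if no dilate $tS$ fit in a half-size arc — equivalently, if $S$ met every arithmetic progression of length roughly $p/2$ — then $2S$ would be forced to exceed $c_1k-3$. One makes this quantitative either by counting, over the $p-1$ dilates, the arithmetic progressions of common difference $t^{-1}$ that avoid $S$ (the bound $m\le c_1k-3$ guaranteeing that on average such a progression is long, hence some dilate is confined), or by a Vosper/Pollard/Kneser-type stability analysis of the pair $(S,S)$ with small sumset; in either case the inequality relating $c_0$ and $c_1$ is precisely the threshold keeping the estimate alive up to $k=c_0p$. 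A convenient first reduction is to obtain, by cruder additive bounds of $k+m$ type, that $S$ lies in \emph{some} arithmetic progression of length less than $p$, and then to work inside that progression to reach the half-size bound and invoke the unwrapping above. I expect tracking the constants through this estimate to be the only part requiring real work; the rest is the standard rectification machinery.
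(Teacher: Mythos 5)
The paper does not prove Theorem~\ref{thm:fv}; it is quoted verbatim from Freiman's 1959 paper (in Nathanson's formulation) and used as a black box (e.g.\ inside Lemma~\ref{lem:recta} with $c_0=8\cdot 10^{-3}$, $c_1=2.44$, and again right after \eqref{eq:|A|}). So there is no in-paper argument to compare against, and your proposal has to be judged on its own.

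Your reduction is the correct architecture and is in fact Freiman's: normalise, show some dilate $tS$ sits in an arc of length $\le (p+1)/2$, unwrap to a Freiman $2$-isomorphic subset of $\Z$, invoke the $(3k-4)$ theorem there, and push the progression back through $t^{-1}$. The bookkeeping you give around this (Cauchy--Davenport to get $m\ge 2k-1$ and $k\ge 2/(c_1-2)$, the observation that the constraint forces $c_1<3$ so $m\le 3k-4$ as an integer inequality, distinctness of the $\le m-k+1<p$ images, shrinking the progression to bound $q$) is all sound. The gap is that the entire mathematical content of the theorem resides in the confinement step, and you do not prove it; you name it, correctly locate where $c_0$ and the inequality $(2c_1-3)/3<(1-c_0c_1)/c_1^{1/2}$ must enter, and then offer two one-line gestures (average a coset-avoidance count over the $p-1$ dilates, or a Vosper/Pollard/Kneser stability analysis) without carrying either out. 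That specific, asymmetric inequality in $c_0$ and $c_1$ is the footprint of Freiman's exponential-sum (trigonometric) estimate: one shows a large Fourier coefficient of $1_S$ exists when $|2S|$ is small, and the numerology that makes $c_1\approx 2.45$ the ceiling at $c_0=1/12$ comes out of optimising that inequality. ``Counting dilates'' alone is what proves rectification for tiny sets (as in Bilu--Lev--Ruzsa, Theorem~\ref{thm:rectlev}), but it degrades long before $|S|\sim p/12$, and Kneser/Vosper do not by themselves localise $S$ to an arc. As written, then, the proposal reduces Theorem~\ref{thm:fv} to the rectification lemma rather than proving it; the missing Fourier-analytic (or equivalent) estimate needs to be supplied for the argument to close.
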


The largest possible numerical value of $c_1$ given by this theorem is
$c_1\approx 2.45$, which falls somewhat short of the value predicted
by the conjecture (namely $3$). In addition, 
Theorem~\ref{thm:fv} 
only guarantees the result for sets $S$ that are small enough.
For example, to guarantee $c_1=2.4$, the theorem needs the assumption
$|S|\leq p/35$. This last assumption was improved to $|S|\leq p/10.7$
by R{\o}dseth \cite{ROD} but without improving the value of the constant $c_1$.

It follows from a recent result of Green and Rusza \cite{gr} on
rectification of sets with small doubling in $\Z/p\Z$ that the value
of $c_1$ can actually be pushed all the way to $3$ while preserving the
conclusion that
$S$ is contained in a short arithmetic progression, but this comes at the
expense of a stringent condition on the size of $S$: namely the
the extra assumption $|S|<10^{-180}p.$

In the present paper, we shall work at the conjecture from a
different direction. Rather than focusing on the best possible value
for the constant $c_1$, we shall try to lift all restrictions on the
size of $S$. First we need to formulate properly what should be the
right version of Freiman's $(3k-4)$ theorem in $\Z/p\Z$.

For $-1\leq m\leq |S|-4$, we want the condition $|2S|= 2|S|+m$ to
imply that $S$ is included in an arithmetic progression of length
$|S|+m+1$. One fact that has not been spelt out explicitly in the
literature is that for such a result to hold, some lower bound on
the size of the {\em complement} $\Z/p\Z\setminus 2S$ of $2S$ must
be formulated. Indeed, if $p-|2S|$ is too small, the conclusion will
not hold even if $m$ is small compared to $|S|-4$. Consider in
particular the following example. Let $S=\{ 0\} \cup \{m+3, m+4,
\ldots ,(p+1)/2\}$. We have $|2S|=p-(m+1)= 2|S|+m$, but it can be
seen with a little thought  that   $S$   is not included in an
arithmetic progression  of length $|S|+m+1$. For the desired result
to hold, we must therefore add the condition $p-|2S|>m+1$. We conjecture
that this extra condition is sufficient for a $\Z/p\Z$-version of
Freiman's $(3k-4)$ theorem to hold. More precisely~:

\begin{conj}\label{conj}
  Let $S\subset\Z/p\Z$ and let  $m=|2S|-2|S|$. Suppose that $m$
satisfies~:
$$
-1\leq m \leq \min\{ |S|-4, p-|2S|-3\}.
$$
Then $S$ is included in an arithmetic progression of length
$|S|+m+1$.
\end{conj}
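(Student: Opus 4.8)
The plan is to reduce to Freiman's $(3k-4)$ theorem over $\Z$, which is sharp: the target length $|2S|-|S|+1=|S|+m+1$ is exactly the one it produces. The key observation is that once $S$ is contained, after a dilation $x\mapsto\lambda x$ (which changes no sumset cardinality), in an interval of length $L$ with $2L-2<p$, the sumset of $S$ computed in $\Z/p\Z$ coincides with the one computed in $\Z$; so $S$ is literally a set of integers of the same doubling, and \cite{fre59} gives the conclusion. The hypothesis $2|2S|-2|S|+3\le p$, equivalently $m\le p-|2S|-3$, is precisely what makes this usable: it forces $2(|S|+m+2)-2\le p-1<p$, so it suffices to first place $S$, up to dilation, inside an interval of length at most $|S|+m+2$; and, as the introduction's example $S=\{0\}\cup\{m+3,\dots,(p+1)/2\}$ shows, a hypothesis of this kind cannot be dropped. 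So everything reduces to producing such an interval, and the route depends on the density $\alpha=|S|/p$.

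If $\alpha$ is below a suitable absolute constant, I would get the interval from rectification: the Green--Ruzsa rectification principle \cite{gr} — or, for $\alpha$ up to a somewhat larger constant, the Fourier method behind Theorem~\ref{thm:fv} together with R{\o}dseth's extension \cite{ROD} — already exhibits $S$ as Freiman $2$-isomorphic to a set of integers. For such small sets one in fact obtains the full conjectured range $m\le|S|-4$, not merely $m\le\epsilon|S|$, so no further work is needed there.

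The essential case is $\alpha$ bounded away from $0$; this is where the hypothesis $|2S|\le(2+\epsilon)|S|$ is really used, and rectification is no longer available. A natural way in is the complement trick: set $C=\Z/p\Z\setminus 2S$. If $c\in C$ and $s\in S$ then $c-s\notin S$ (else $c=s+(c-s)\in 2S$), so $C-S$ is disjoint from $S$ and hence $|C-S|\le p-|S|$, while Cauchy--Davenport gives $|C-S|\ge|C|+|S|-1=p-|S|-m-1$. Thus $(C,-S)$ is a critical pair — its sumset exceeds the Cauchy--Davenport bound by at most $m+1$ — and since $S$ is a positive proportion of $p$, its sumset $C-S$ has size $\le(1-\alpha)p$, comfortably below $p$. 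Feeding this into a $(3k-4)$-type stability result for $\Z/p\Z$, valid with no restriction on the size of the sets (the case $m=0$ is Vosper's theorem), one gets, after dilating, that $S$ lies in an interval of length $|S|+m+O(1)$; here the hypothesis $2|2S|-2|S|+3\le p$ is exactly what is needed to verify the applicability condition ``$m$ small compared to $|C|$''. The integer reduction of the first paragraph then cleans this up to the sharp length.

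The hard part will be supplying that last stability result, i.e.\ upgrading Vosper's theorem ($m=0$) to the gap-$m$ range in the dense regime, where there is no integer model to imitate — one must do it intrinsically in $\Z/p\Z$, for instance by induction on $m$ via a Dyson transform or by the isoperimetric method, and one must be careful to land on an interval of length $|S|+m+O(1)$ rather than $|S|+2m+O(1)$, the latter being too long to survive the integer reduction. This seems achievable exactly in the range $m\le\epsilon|S|$, which is why the theorem is stated there; carrying it all the way to the conjectured $m\le|S|-4$ for dense $S$ is the obstruction that keeps the full conjecture open.
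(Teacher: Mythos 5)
You have correctly identified that this is the open Conjecture~\ref{conj}, which the paper does \emph{not} prove: what the paper establishes is Theorem~\ref{thm:main}, the restricted version with the extra hypothesis $|2S|<(2+\epsilon)|S|$, i.e.\ $m\le\epsilon|S|$ for a small explicit $\epsilon$. There is therefore no proof in the paper to compare against, and your proposal is an outline with a genuine gap that you yourself name in your final paragraph: the missing piece is a stability theorem for critical pairs in $\Z/p\Z$ at deficiency of order $m$ when $S$ has positive density and $m$ ranges all the way up to $|S|-4$. Your reduction via dilation and Freiman-in-$\Z$, and the observation that $C=\Z/p\Z\setminus 2S$ satisfies $|C-S|\le p-|S|$ so that $(C,-S)$ is critical of deficiency $\le m+1$, are both correct and standard, but neither supplies that stability theorem, and "feeding this into a $(3k-4)$-type stability result for $\Z/p\Z$" is precisely begging the open question. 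A second, smaller flaw: your claim that rectification already gives the full range $m\le|S|-4$ for all $\alpha$ below "a suitable absolute constant" is too generous. Green--Ruzsa does give the full range, but only for $\alpha<10^{-180}$; the Freiman/R\o dseth Fourier approach extends the density range (up to roughly $1/10.7$) but drops the exponent well below $3$ (to about $2.4$), so there is a wide middle regime of $\alpha$ where neither tool reaches $m\le|S|-4$ and which your sketch does not address.

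For what it is worth, the paper's route to the provable special case is quite different from your complement trick. It works with an $(m+3)$-atom $A$ of $S$ (a smallest set of size $\ge m+3$ minimizing $|S+A|-|A|$), shows $|A|\le 3m+5$ via the intersection property of atoms, then uses Pl\"unnecke--Ruzsa to bound $|kA|$ and thereby force $A$ or $2A$ into a short arithmetic progression; the structure is then transferred to $S$ by a density argument, and Lev--Smeliansky's distinct-set $(3k-4)$ theorem in $\Z$ closes the argument. Both routes stall at the same place once $m$ exceeds a small fraction of $|S|$ for dense $S$, which is why the conjecture remains open.
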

Note that $p-|2S|=p-2|S|-m$ can not be equal to $m+2$, otherwise
$p$ would be an even number. Therefore condition (ii) of the
conjecture is equivalent to $p-|2S|>m+1$, as implied by the
example above.

We remark that the cases $m=-1,0,1$ of this conjecture are known.
They are implied by Vosper's theorem \cite{vosper} ($m=-1$), by a
result of  Hamidoune and R{\o}dseth \cite{HR} ($m=0$) and by a
result of   Hamidoune and the present authors   \cite{GOY} ($m=1$).
 In the present paper we shall prove
conjecture~\ref{conj} for all values of $m$ up to $\epsilon |S|$,
where $\epsilon$ is a fixed absolute constant. More precisely, our
main result is~:

\begin{theorem}\label{thm:main}
There exist positive numbers $p_0$ and $\epsilon$ such that, for all
primes $p>p_0$, any subset $S$ of $\Z/p\Z$ such that
\begin{itemize}
\item[(i)]
$|2S| < (2 + \epsilon)|S|$,
\item[(ii)]
$m=|2S|-2|S|$ satisfies $m\leq \min \{ |S|-4, p-|2S|-3\}$,
\end{itemize}
is included in an arithmetic progression of length $|S|+m+1$.
\end{theorem}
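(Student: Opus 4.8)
The plan is to argue by a "descent" on the size of the complement, combining a Kneser/Freiman-type rectification in a small interval with the known small-$m$ cases. First I would introduce the set $2S$ and observe that the hypotheses give us a set with $|2S| = 2|S| + m$ where $m$ is small (at most $\epsilon|S|$), and that the complement $\Z/p\Z\setminus 2S$ is not too small ($p - |2S| > m+1$). The key structural tool I would invoke is a Freiman-type result in $\Z$ obtained via rectification: if we can show $S$ is Freiman-isomorphic to a set of integers (equivalently, $S$ "rectifies"), then Freiman's $(3k-4)$ theorem in $\Z$ finishes the job immediately, since $|2S| \le (2+\epsilon)|S| < 3|S| - 4$ for $|S|$ large. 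So the heart of the matter is a rectification statement: a set $S$ with doubling constant close to $2$ in $\Z/p\Z$, and with $2S$ far from filling the whole group, must be rectifiable.

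To prove such a rectification statement I would proceed as follows. Pick a nonzero $d \in \Z/p\Z$; after dilating, assume $d = 1$, and consider the "interval occupation" of $S$, i.e., how $S$ sits relative to arithmetic progressions with common difference $1$. The idea is to find a direction in which $S$ is concentrated in a short interval. One natural route: apply an isoperimetric or Fourier argument to locate a large Bohr-type set, or more elementarily, use the fact that small-doubling sets in $\Z/p\Z$ have a large Fourier coefficient, hence lie mostly in a union of few cosets of a short progression. Then a covering/counting argument reduces to the case where $S$ is within an arithmetic progression of length roughly $(2+\epsilon)|S|$; inside such a short progression, multiplication is "the same" as in $\Z$ provided the progression has length $< p$, which is guaranteed precisely by condition (ii) (the bound $2(|2S|) - 2|S| + 3 \le p$ from the abstract). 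This is the step where condition (ii) does real work: it is what lets us transfer from $\Z/p\Z$ back to $\Z$ without collision.

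Concretely, the key steps in order would be: (1) reduce to $|S|$ large and $m \le \epsilon|S|$, noting $|2S| < 3|S| - 4$; (2) show that $S$ — possibly after a dilation $x \mapsto \lambda x$ — is contained in an arithmetic progression $P$ of length at most $|2S| - |S| + 1 + o(|S|)$, or at least of length $< p/2$, say; (3) having placed $S$ inside a short progression, identify $S$ with a set of integers $S'$ via the obvious bijection and check this is a Freiman $2$-isomorphism, using that $2P$ has length $< p$; (4) apply Freiman's $(3k-4)$ theorem in $\Z$ to $S'$ to get that $S'$, hence $S$, lies in an arithmetic progression of length $|2S| - |S| + 1 = |S| + m + 1$; (5) handle the small cases $m \in \{-1, 0, 1\}$ (or any $O(1)$ range) separately by citing Vosper, Hamidoune--R\o dseth, and Hamidoune--Serra--Z\'emor, since the rectification argument may need $m$ to be safely bounded below or may degenerate.

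The main obstacle is step (2): showing that a set with doubling just above $2$ is, after a suitable dilation, essentially contained in a short arithmetic progression, and doing so with explicit control so that the resulting progression is short enough to survive the transfer to $\Z$. Vosper's theorem handles the extreme case $|2S| = 2|S| - 1$ exactly, and the Hamidoune--R\o dseth and Hamidoune--Serra--Z\'emor results handle $m = 0, 1$; the difficulty is propagating this to all $m$ up to $\epsilon|S|$ uniformly, which is presumably where a robust isoperimetric method (the $\partial$/boundary operator notation $\G$ hinted at in the preamble suggests Hamidoune's isoperimetric method will be used) or an iteration on $m$ comes in. I expect the proof to set up an induction on $m$, using the fact that if $S$ has a "hole" then removing a well-chosen element reduces to a smaller case while keeping the doubling controlled, with the base cases being the known $m \le 1$ results.
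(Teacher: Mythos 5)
Your plan hinges on step (2): directly rectifying $S$, i.e.\ showing that after a suitable dilation $S$ lies in a short arithmetic progression so that $2S$ can be computed in $\Z$. You correctly flag this as the ``main obstacle,'' but you do not actually solve it, and in fact this route is precisely the one the paper argues \emph{cannot} work here. The theorem makes no upper-bound assumption on $|S|$, so $|S|$ may be a constant fraction of $p$ (indeed one may assume $|S|\geq p/35$, since Theorem~\ref{thm:fv} covers smaller $S$). For $S$ that large, the Fourier/Bohr-set and Green--Ruzsa rectification technology you propose degenerates: the Green--Ruzsa result needs $|S|<10^{-180}p$, and no known large-spectrum argument places a set of density $1/35$ with doubling barely above $2$ into a short progression without essentially assuming the conclusion. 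Your alternative suggestion of an induction on $m$ (peeling elements to reduce to the $m\leq 1$ base cases) also does not close the gap: removing one element changes $|2S|$ uncontrollably, so there is no clean descent.

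The paper's actual argument sidesteps the problem by never rectifying $S$ directly. It introduces an $(m+3)$-atom $A$ of $S$ --- a set minimizing $|X+S|-|X|$ among $X$ of size $\geq m+3$ --- and proves $|A|\leq 3m+5$, so $A$ is genuinely small ($O(m)\ll|S|$) and has small sumset $|S+A|\leq|S|+|A|+m$. Plünnecke--Ruzsa applied to this pair then bounds $|kA|$, which forces $A$ (or $2A$) into a short arithmetic progression: rectification is thus performed only on the small set $A$, where it is tractable. The structure is then transferred from $A$ to $S$ via a density/counting lemma (Lemma~\ref{lem:dens}) showing that if $\ell_A(S)+\ell(A)>p$ then $|S+A|$ would exceed the isoperimetric bound, and finally Lev--Smeliansky's distinct-set $(3k-4)$ theorem is applied to the pair $(S,A)$ rather than to $(S,S)$. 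So the isoperimetric method you guessed at is indeed present, but it is used to construct and exploit the auxiliary atom $A$, not as an induction on $m$, and the decisive idea --- rectify the small atom, then transfer --- is missing from your proposal.
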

We shall prove this result with the numerical values
$\epsilon=10^{-4}$ and $p_0=2^{94}$.

In the past, the dominant strategy, already present in Freiman's original proof
of Theorem \ref{thm:fv},  has been to {\em rectify} the set $S$,
i.e. find an argument that enables one to claim that the sum $S+S$
must behave as in $\Z$, and then apply Freiman's $(3k-4)$
theorem.  Rectifying $S$ directly however, becomes more and
more difficult when the size of $S$ grows, hence the different upper
bounds on $S$ that one regularly encounters in the literature. In
our case, without any upper bound on $S$, rectifying $S$ by studying
its structure directly is a difficult challenge. Our method will be
indirect. Our strategy is to use an auxiliary set $A$ that minimizes
the difference $|S+A|-|S|$ among all sets such that $|A|\geq m+3$.
The set $A$ is called an $(m+3)$-atom of $S$ and using such sets to
derive properties of $S$ is an instance of the isoperimetric (or
atomic) method in additive number theory which was introduced by
Hamidoune and developed in \cite{HALG,HWAR,HACTA,GOY,GOY2}. The
point of introducing the set $A$ is that we shall manage to prove
that it is both significantly smaller than $S$ and also has a small
sumset $2A$. This will enable us to show that first the sum $A+A$, and
then the sum $S+A$, must behave as in $\Z$. Finally we will use
 Lev and Smelianski's distinct set version \cite{ls} of
Freiman's $(3k-4)$ Theorem to conclude.

The paper is organised as follows. The next section will introduce
$k$-atoms and their properties that are relevant to our purposes. In
Section 3 we will show how our method works proving Theorem
\ref{thm:main} in the relatively easy case when $m$ is an arbitrary
constant or a slowly growing function of $p$ (i.e. $\log p$). In
Section 4 we will prove Theorem \ref{thm:main} in full when $m$ is a
linear function of $|S|$.

\section{Atoms}
Let  $S$ be a
subset of $\Z/p\Z$ such that $0\in S$. For a positive integer $k$, we
shall say that $S$ is $k$-{\em separable} if there exists
$X\subset \Z/p\Z$ such that $|X|\geq k$ and  $|X+S|\leq p-k$.

Suppose that $S$ is $k$-{separable}. The
$k$-th {\em isoperimetric number} of $S$  is then defined by
\begin{equation}  \label{eq:kappa}
\kappa _k (S)=\min  \{|X+S|-|X|,\; | \ \
X\subset\Z/p\Z, \ |X|\geq k \ {\rm and }\ |X+S|\leq p-k\}.
\end{equation}

 For a $k$-separable
set $S$, a subset $X$ achieving the above minimum is called a
$k$-{\em fragment} of $S$. A $k$-fragment with minimal cardinality
is called a $k$-{\em atom}.

What makes $k$-atoms interesting objects is the following lemma~:

\begin{lemma}[The intersection property \cite{HALG}]
  \label{lem:intersection}
Let  $S$ be a subset of $\Z/p\Z$ such that $0\in S$, and
suppose $S$ is $k$-separable.
Let $A$  be a  $k$-atom of $S$. Let  $F$  be a $k$-fragment of $S$
such that  $ A \not\subset F $.  Then   $|A \cap F|\leq k-1 .$
\end{lemma}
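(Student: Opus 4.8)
The plan is to prove the intersection property by a standard submodularity argument, the engine behind virtually all results in the isoperimetric method. First I would fix a $k$-atom $A$ and a $k$-fragment $F$ with $A\not\subset F$, and consider the two sets $A\cap F$ and $A\cup F$. The key identity is the submodularity of the operator $X\mapsto |X+S|$, namely
\[
|(A\cap F)+S| + |(A\cup F)+S| \le |A+S| + |F+S|,
\]
which follows from the pointwise inclusions $(A\cap F)+S \subseteq (A+S)\cap(F+S)$ and $(A\cup F)+S = (A+S)\cup(F+S)$ together with the inclusion–exclusion formula for cardinalities. Combining this with the trivial identity $|A\cap F| + |A\cup F| = |A| + |F|$, one obtains
\[
\bigl(|(A\cap F)+S| - |A\cap F|\bigr) + \bigl(|(A\cup F)+S| - |A\cup F|\bigr) \le \kappa_k(S) + \kappa_k(S),
\]
using that both $A$ and $F$ achieve the minimum $\kappa_k(S)$.

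The next step is to argue that, provided $A\cap F$ is itself a valid competitor in the definition of $\kappa_k(S)$ — that is, provided $|A\cap F|\ge k$ and $|(A\cap F)+S|\le p-k$ — we would have $|(A\cap F)+S| - |A\cap F| \ge \kappa_k(S)$, and hence $|(A\cup F)+S| - |A\cup F| \le \kappa_k(S)$. Here I would need to check that $A\cup F$ also satisfies the size constraints so that it too is a legitimate competitor; the upper bound $|(A\cup F)+S|\le p-k$ should follow from $|(A\cup F)+S| = |A+S|+|F+S|-|(A+S)\cap(F+S)| \le |A+S|+|F+S|-|(A\cap F)+S|$ and a short computation, while $|A\cup F|\ge |A|\ge k$ is immediate. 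Thus $A\cup F$ would be a $k$-fragment, and since $A\not\subset F$ means $A\cup F$ strictly contains $A$, this contradicts the minimality of the cardinality of the atom $A$. Therefore the assumption $|A\cap F|\ge k$ must fail, giving $|A\cap F|\le k-1$ as claimed.

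The main obstacle, and the point requiring genuine care rather than routine manipulation, is verifying that both $A\cap F$ and $A\cup F$ satisfy the constraint $|\,\cdot\,+S|\le p-k$, so that the minimality of $\kappa_k(S)$ can legitimately be invoked for each of them; without the upper-bound constraint in the definition \eqref{eq:kappa} the argument would be clean, but with it one must confirm that passing to intersections and unions does not push the sumset past $p-k$. This is handled by the same inclusion–exclusion bookkeeping as above: from $|(A\cap F)+S| + |(A\cup F)+S| \le |A+S| + |F+S| \le 2(p-k)$, if the intersection were to violate the bound the union would have to compensate, and one plays the two constraints against each other. Once these membership checks are in place, the contradiction with the minimal cardinality of $A$ closes the proof immediately.
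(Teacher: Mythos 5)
The paper does not prove this lemma; it is quoted from Hamidoune's paper \cite{HALG}, so the only thing to assess is the internal correctness of your argument.

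The submodular inequality $|(A\cap F)+S|+|(A\cup F)+S|\le|A+S|+|F+S|$ and the identity $|A\cap F|+|A\cup F|=|A|+|F|$ are both correct, and combining them to bound $\partial(A\cap F)+\partial(A\cup F)\le 2\kappa_k(S)$ is exactly the right engine. The error is in the closing step. You conclude that $A\cup F$ is a $k$-fragment strictly containing $A$ and declare that this ``contradicts the minimality of the cardinality of the atom $A$.'' It does not: an atom is a fragment of \emph{minimal} cardinality, and the existence of strictly larger fragments is completely consistent with that (indeed $F$ itself may be much larger than $A$). The contradiction must instead come from $A\cap F$. Under the hypothesis $|A\cap F|\ge k$, once you have verified that both $A\cap F$ and $A\cup F$ are legitimate competitors in \eqref{eq:kappa}, the inequality $\partial(A\cap F)+\partial(A\cup F)\le 2\kappa_k(S)$ together with $\partial(A\cap F)\ge\kappa_k(S)$ and $\partial(A\cup F)\ge\kappa_k(S)$ forces equality in both, so in particular $A\cap F$ is a $k$-fragment. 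Since $A\not\subset F$ gives $A\cap F\subsetneq A$, this $k$-fragment has cardinality strictly less than $|A|$, which \emph{is} the contradiction with $A$ being a $k$-atom. In other words, your chain of implications produces the right equalities, but you look at the wrong one of the two sets when harvesting the contradiction.

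A secondary caveat: your treatment of the constraint $|(A\cup F)+S|\le p-k$ is more than a ``short computation.'' From $|(A\cup F)+S|\le |A+S|+|F+S|-|(A\cap F)+S|$ and $|A+S|,|F+S|\le p-k$ one gets $|(A\cup F)+S|\le 2(p-k)-|(A\cap F)+S|$, and pushing this below $p-k$ requires $|(A\cap F)+S|\ge p-k$ plus... wait, no — it requires $|(A\cap F)+S|\ge p-k$, which one does not have. In Hamidoune's framework this feasibility issue is genuinely delicate and is handled with additional structure (duality between a fragment and the complement of its boundary-closure), not a one-line inclusion–exclusion estimate. So this step also needs more than you have given it, though the more conspicuous problem is the misidentified contradiction described above.
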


The following Lemma is proved in \cite{GOY2}:

\begin{lemma}\label{lem:m+3} Let $S\subset \Z/p\Z$ with $|S|\ge
  3$ and $0\in S$. Suppose $S$ is $2$-separable and
$\kappa_2 (S)\le |S|+m$. Let $A$ be a $2$--atom of $S$. Then $|A|\le
m+3$.
\end{lemma}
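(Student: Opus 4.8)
The plan is to argue by contradiction: suppose $A$ is a $2$-atom of $S$ with $|A|\geq m+4$, and derive a contradiction with the minimality that defines $\kappa_2(S)$ and with the intersection property (Lemma~\ref{lem:intersection}). The starting point is that a $2$-atom $A$ is in particular a $2$-fragment, so $\kappa_2(S)=|A+S|-|A|\leq |S|+m$, whence $|A+S|\leq |A|+|S|+m$. Since $0\in S$ we have $A\subseteq A+S$, so $A+S$ is a set containing $A$ with at most $|S|+m$ extra elements. The idea is to exploit the hypothesis $|A|\geq m+4$ to produce a \emph{second} $2$-fragment that is strictly smaller than $A$, contradicting that $A$ is an atom.

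First I would look for a translate or a structured piece inside $A+S$. The natural candidate for a competing fragment is built from a single element: because $0\in S$, for any $a\in A$ the set $A$ meets $a+S$ (at the point $a$), and one studies how $A$ sits relative to the translates $\{a+S : a\in A\}$. Here is the cleaner route. Consider $X = (A+S)\setminus A$, so $|X| = |A+S|-|A| = \kappa_2(S)\leq |S|+m$. One checks that $X$ is itself a candidate fragment: $X+S$ will be contained in $(A+S)+S$-type sets, and because $A$ is large one can show $|X|\geq 2$ and $|X+S|\leq p-2$, so $X$ is $2$-admissible and $\kappa_2(S)\leq |X+S|-|X|$. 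The heart of the matter is to bound $|X+S|$: one wants $|X+S|-|X|\leq \kappa_2(S)$ with $|X|=\kappa_2(S)\leq |S|+m < |S|+|S|-4 \leq |A|+|S|-4$ if $|A|\geq m+4$, so that $X$ is a fragment that is \emph{strictly smaller} than $A$ provided $\kappa_2(S)<|A|$ — which is exactly what $|A|\geq m+4>m+3\geq\kappa_2(S)-|S|$ fails to give directly, so the comparison must be set up carefully using $\kappa_2(S)\le |S|+m$ together with $|S|\ge 3$.

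The step I expect to be the main obstacle is controlling $|X+S|$ where $X=(A+S)\setminus A$, i.e.\ showing that passing from $A$ to this ``shell'' does not blow up the sumset. The tool for this is precisely the intersection property: any $2$-fragment $F$ with $A\not\subset F$ satisfies $|A\cap F|\leq 1$. One applies this with $F$ a suitable translate of a fragment (or with the complement construction $F = \Z/p\Z\setminus(A+S)$, whose behaviour under adding $-S$ is dual to $A$'s behaviour under adding $S$), to force $A$ and $F$ to be nearly disjoint, and then a counting argument comparing $|A|$, $|S|$, $|A+S|$ and $p$ yields $|A|\leq m+3$. Concretely I would: (1) record $|A+S|\le |A|+|S|+m$ from $A$ being a fragment; (2) set $B=\Z/p\Z\setminus(A+S)$ and observe $B$ is (up to the sign $-S$) a $2$-fragment as well, with $|B| = p-|A+S| \ge p - |A|-|S|-m \ge k=2$ using hypothesis that $S$ is $2$-separable in the appropriate dual form; (3) apply Lemma~\ref{lem:intersection} to conclude $A$ and a translate of $B$ meet in $\le 1$ point, hence $|A| + |B| \le |A+S|+1$ or a similar inequality; (4) combine (1) and (3): $|A|+(p-|A|-|S|-m)\le (|A|+|S|+m)+1$ rearranges to $|A|\ge p-2|S|-2m-1$, which together with the separability bound $|A+S|\le p-2$ and $|A+S|\ge |A|+\kappa_2(S)$ forces $|A|$ small; the arithmetic then collapses to $|A|\le m+3$. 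The delicate points are getting the signs and the role of $0\in S$ right so that Lemma~\ref{lem:intersection} applies to the correct fragment, and checking the $2$-separability side conditions at each invocation.
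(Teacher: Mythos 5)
The paper does not prove this lemma itself; it invokes it as a result from \cite{GOY2}. So the only thing one can assess here is whether your proposal is internally sound, and it is not: the final arithmetic you describe does not close.

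Your step (4) combines the intersection estimate $|A|+|B|\le |A+S|+1$ (granting it for the moment) with $|B|=p-|A+S|$ and $|A+S|\le |A|+|S|+m$ to deduce $|A|\ge p-2|S|-2m-1$. That is a \emph{lower} bound on $|A|$, pointing the wrong way. You then propose to combine it with $|A+S|\le p-2$, i.e.\ $|A|\le p-2-\kappa_2(S)\le p-|S|-1$. Putting the two together only yields $p-2|S|-2m-1\le p-|S|-1$, i.e.\ $|S|+2m\ge 0$, which is vacuous and carries no information about $|A|$. Nothing in the listed inequalities ``collapses'' to $|A|\le m+3$, and the hedges in the write-up (``or a similar inequality'', ``forces $|A|$ small'') are precisely where the actual argument is missing. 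There is also a prior problem in step (3): $B=\Z/p\Z\setminus(A+S)$ is a $2$-fragment for $-S$, not for $S$, so Lemma~\ref{lem:intersection} as stated (which compares an atom and a fragment of the \emph{same} set $S$) does not apply to $A$ and a translate of $B$ without a duality lemma that you have not supplied; and even granting such a lemma, you would still need to locate a translate of $B$ for which both $A\cup(c+B)\subseteq A+S$ and $A\not\subset c+B$ hold, which you do not do.

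A workable route must exploit a stronger fact than the one you use: applying the intersection property to $A$ against its own translates $a+A$ ($a\ne 0$) shows that any two distinct translates of $A$ meet in at most one point, i.e.\ $A$ is a Sidon set. The size bound $|A|\le m+3$ then comes from pitting the resulting quadratic lower bound on a sumset of $A$ against the linear upper bound $|A+S|\le |A|+|S|+m$, using that $\Z/p\Z$ has no proper nontrivial subgroups. Your proposal never isolates this Sidon property, which is the engine of the argument in \cite{GOY2}, and the surrogate you chose (the complement $B$) does not produce an upper bound on $|A|$.
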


Lemma~\ref{lem:m+3} implies the following upper bound on the size of
atoms.

\begin{lemma}\label{lem:3m+5}
Let $k\geq 3$ and let $A$ be a $k$--atom of a $k$--separable set
$S\subset \Z/p\Z$ with $0\in S$, $|S|\ge 2$ and $\kappa_k (S)\le
|S|+m$. Then $|A|\le 2m+k+2$.
\end{lemma}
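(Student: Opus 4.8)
The plan is to reduce the statement about a $k$-atom to the case $k=2$ already handled by Lemma~\ref{lem:m+3}. The key point is that being $k$-separable for $k\ge 3$ certainly implies being $2$-separable (the witness $X$ with $|X|\ge k\ge 2$ and $|X+S|\le p-k\le p-2$ works), so $\kappa_2(S)$ is defined. First I would compare $\kappa_k(S)$ with $\kappa_2(S)$. By definition $\kappa_2(S)\le \kappa_k(S)\le |S|+m$ is not immediate because the feasible sets for $k$ are fewer; rather, the right inequality to extract is an upper bound on $\kappa_2(S)$. Here I would use a $k$-fragment $X$ of $S$ (so $|X+S|-|X|=\kappa_k(S)\le|S|+m$) together with a bit of surgery: either $X$ itself, or a slightly shrunk/grown version of it, serves as a witness showing $\kappa_2(S)\le |S|+m+(\text{small error})$. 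In fact the cleanest route is to invoke Lemma~\ref{lem:m+3} only after establishing $\kappa_2(S)\le |S|+m'$ for a suitable $m'$, then a $2$-atom $B$ satisfies $|B|\le m'+3$.

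Next I would relate the $k$-atom $A$ to this $2$-atom $B$. Since $A$ is a $k$-fragment and in particular a $2$-fragment (as $k\ge 3\ge 2$, we have $|A|\ge k\ge 2$ and $|A+S|\le p-k\le p-2$), its cardinality is at least that of a $2$-atom but we need an upper bound. The standard device is the intersection property, Lemma~\ref{lem:intersection}, applied with the roles set up so that $A$ (as a $2$-fragment) and a $2$-atom $B$ interact: if $A\not\subset B$ and $B\not\subset A$ then $|A\cap B|\le 1$. One then builds a larger feasible set, e.g. considers $A\cup(B+t)$ or uses translates, to bound $|A|$ in terms of $|B|\le m+3$ and $k$. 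Counting $|A| = |A\setminus B| + |A\cap B|$ and using that $A$ restricted appropriately is still a competitor in the $k$-isoperimetric problem forces $|A\setminus B|$ to be controlled by $k-1$ plus $|B|$, yielding $|A|\le |B|+(k-1)+\text{(small)} \le (m+3)+(k-1)=m+k+2$; tightening or loosening the bookkeeping gives the stated $2m+k+2$. The factor $2m$ rather than $m$ presumably comes from the error term incurred when passing from the $k$-isoperimetric inequality to the $2$-isoperimetric inequality, i.e. $m' = m + (\text{something like } m)$.

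The main obstacle I expect is precisely this passage from $\kappa_k$ to $\kappa_2$: a $k$-fragment $X$ need not be a $2$-fragment with the same isoperimetric value, and shrinking $X$ down to size $\ge 2$ or enlarging $X+S$ to stay $\le p-2$ can inflate $|X+S|-|X|$. One has to check that this inflation is at most about $m$, which is where the hypothesis $\kappa_k(S)\le |S|+m$ (rather than something larger) is used in an essential way, and where the constant $2m+k+2$ as opposed to a cleaner $m+k+2$ originates. A secondary subtlety is the degenerate cases where $A\subset B$ or $B\subset A$, which must be disposed of separately but give even better bounds. Once $\kappa_2(S)\le|S|+O(m)$ is secured, Lemma~\ref{lem:m+3} and Lemma~\ref{lem:intersection} combine routinely.
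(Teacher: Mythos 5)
Your proposal misses the key structural move and, as written, would not close. Two genuine gaps:

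\textbf{Misplaced application of Lemma~\ref{lem:m+3}.} You plan to apply Lemma~\ref{lem:m+3} to $S$ and produce a $2$-atom $B$ of $S$. The paper instead views the $k$-atom $A$ \emph{as a set in its own right} and applies Lemma~\ref{lem:m+3} to $A$: one shows $\kappa_2(A)\le |A|+m$ (with $S$ as the witness $X$, since $|S+A|-|S|=\kappa_k(S)-(|A|-|S|)\le |A|+m$ and $|S+A|\le p-k\le p-2$), takes a $2$-atom $B$ of $A$ with $0\in B$ and $|B|\le m+3$, and obtains $|A\cup(b+A)|\le|B+A|\le |A|+|B|+m\le |A|+2m+3$ for any $b\in B\setminus\{0\}$. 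Your worry about ``inflation'' in passing from $\kappa_k$ to $\kappa_2$ is a red herring: for a fixed set the feasible family for $\kappa_2$ is a \emph{superset} of that for $\kappa_k$, so $\kappa_2\le\kappa_k$ trivially; but more to the point, no such passage for $S$ is used at all. The $2m$ in the bound arises from adding the $m$ in $\kappa_2(A)\le |A|+m$ to the $m$ hiding in $|B|\le m+3$ --- not from an error in a $\kappa_k\to\kappa_2$ reduction.

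\textbf{The intersection property is invoked at the wrong scale and with the wrong pair.} You want to run Lemma~\ref{lem:intersection} with the $k$-atom $A$ (treated as a $2$-fragment of $S$) against a $2$-atom $B$ of $S$. But $A$ is a $k$-fragment, not in general a $2$-fragment: it realizes $\kappa_k(S)$, and if $\kappa_2(S)<\kappa_k(S)$ then $A$ is merely a feasible competitor, not a minimizer, for the $2$-problem, so the hypotheses of the intersection property are not met. You also cannot lift to level $k$, since then $B$ with $|B|\le m+3$ may fail $|B|\ge k$. The paper sidesteps this entirely: it compares the \emph{two $k$-atoms of $S$}, namely $A$ and its translate $b+A$ (both $k$-atoms since atoms are translation-invariant and $b+A\ne A$ in $\Z/p\Z$), getting $|A\cap(b+A)|\le k-1$, hence $|A\cup(b+A)|\ge 2|A|-(k-1)$. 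Combined with the display above this gives $2|A|-(k-1)\le |A|+2m+3$, i.e.\ $|A|\le 2m+k+2$. Your sketch never produces this second translate of $A$, and the decomposition $|A|=|A\setminus B|+|A\cap B|\le |A\setminus B|+1$ bounds the wrong quantity: it caps the overlap, not $|A|$ itself, and you have no independent control of $|A\setminus B|$.
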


\begin{proof} 
The set $A$ is clearly $2$--separable.
Let $B$ be a $2$--atom of $A$ with $0\in B$, so that
$|B+A|\le |B|+|A|+m$. Let $b\in B$, $b\neq 0$. By Lemma
\ref{lem:m+3} we have $|B|\le m+3$. Therefore, \be \label{eq:3m+5}
|A\cup (b+A)|=|\{ 0,b\}+A|\le |B+A|\le |A|+2m+3. \ee But $b+A$ is
also a $k$--atom of $S$. By the intersection property, it follows
that $|A\cap (b+A)|\le k-1$. Hence $2|A|-(k-1)\le |A\cup (b+A)|$
which together with (\ref{eq:3m+5}) gives the result.
\end{proof}

From now on $S$ will refer to a subset of $\Z/p\Z$ satisfying
conditions (i) and (ii) of Theorem~\ref{thm:main} for a fixed
$\epsilon >0$ to be determined later, and $m$ always denotes the
integer $m=|2S|-|S|$. Without loss of generality we will also assume
$0\in S$.

Note that condition  (ii) implies that $S$ is $(m+3)$--separable so
that $(m+3)$-atoms of $S$ exist. Note that by the definition of an
atom, if $X$ is an atom of $S$ then so is $x+X$ for any
$x\in\Z/p\Z$. Therefore there are atoms containing the zero element.

In the sequel
$A$ will denote an $(m+3)$--atom of $S$ with $0\in A$.
We will regularly call upon the following two inequalities:
\be
\label{eq:ubsa} |S+A|\le |S|+|A|+m
\ee
which follows from the definition of an atom, and
\be\label{eq:|A|}
  |A|\le 3m+5.
\ee
which follows from Lemma \ref{lem:3m+5} with $k=m+3$.

The reader should also bear in mind that for all practical purposes,
inequality \eqref{eq:|A|} means that we will only be dealing with
cases when $|A|$ is significantly smaller than $|S|$. Indeed, we
shall prove Theorem~\ref{thm:main} for a small value of $\epsilon$,
namely $\epsilon=10^{-4}$, so that $3m$ is very much smaller than
$|S|$. We can also freely assume that $|S|\geq p/35$, since
otherwise Freiman's Theorem~\ref{thm:fv}  gives the result with
$\epsilon =0.4$. The prime $p$ will also be assumed to be larger
than some fixed value $p_0$ to be determined later.

\section{The case $m\leq \log p$}
In this section we will deal with the case when $m$ is a very small
quantity, i.e. smaller than a logarithmic function of $p$.
This will allow us to introduce, without technical difficulties
to hinder us, the general idea of the method which is to first
show that $A$ must be contained in a short arithmetic progression
and then to transfer the structure of $A$ to the larger set $S$.
It will also serve the additional purpose of allowing us to suppose
$m\geq 6$ when we switch to the looser condition $m\leq\epsilon |S|$.

We start by stating some results that we shall call upon. The first is
a generalization of Freiman's theorem in $\Z$ to sums of
different sets and is proved by Lev and
Smelianski in \cite{ls}, we give it here somewhat reworded
(see also \cite[Th. 5.12]{tv}).

\begin{theorem}[Lev and Smelianski \cite{ls}]\label{thm:ls2}
Let $X$ and $Y$ be two
nonempty finite sets of integers with
$$
|X+Y|= |X|+|Y|+\mu.
$$
Assume that    $\mu\le \min \{|X|, |Y|\}-3$ and that one of the two
sets $X,Y$ has size at least $\mu +4$. Then  $X$ is contained in an
arithmetic progression of length $|X|+\mu +1$ and $Y$ is contained
in an arithmetic progression of length $|Y|+\mu +1$.
\end{theorem}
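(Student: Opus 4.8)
The statement to be proved is the distinct-set generalization of Freiman's $(3k-4)$ theorem. Since the excerpt quotes this as a result from \cite{ls} and immediately \emph{uses} it, the "proof proposal" here is a sketch of how one would establish it from scratch, in the spirit of how the paper would present it if it were self-contained.

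The plan is to reduce the two-set case to the one-set case by a symmetrization/averaging argument, or alternatively to run the Freiman $(3k-4)$ argument directly on the pair $(X,Y)$. First I would normalize: translate so that $\min X = \min Y = 0$, and let $n = \max X$, $n' = \max Y$; without loss of generality $n \le n'$ (and we may also dilate by a common unit so that the combined set $X \cup Y$ generates $\Z$, i.e. $\gcd$ of all differences is $1$). The key quantity to control is the length $n'$ of the longer set: the goal is to show $n' \le |Y| + \mu$, which is exactly the assertion that $Y$ lies in an arithmetic progression (namely $\{0,1,\dots,n'\}$) of length $n'+1 \le |Y|+\mu+1$, and then the symmetric bound for $X$.

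The main steps I would carry out, in order: (1) \emph{A lower bound on $|X+Y|$ in terms of lengths.} Using the fact that $X+Y \subseteq \{0,1,\dots,n+n'\}$ together with a Cauchy--Davenport-type / "pigeonhole on residues modulo large gaps" argument, derive $|X+Y| \ge n' + |X|$ — this is the easy direction and already gives something, but not enough. (2) \emph{Iterated use of the hypothesis $|X+Y| = |X|+|Y|+\mu$ with $\mu$ small.} The real work is to show that $X$ and $Y$ cannot both have large "holes". Consider the partition of $\{0,\dots,n'\}$ according to whether a point is missing from $Y$; a hole in $Y$ of length $\ell$ forces, via considering $X + (\text{points straddling the hole})$, at least $\ell$ "extra" elements in $X+Y$ beyond the generic count, unless $X$ is correspondingly short. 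Quantifying this: if $Y$ omits $t = n' + 1 - |Y|$ points from $\{0,\dots,n'\}$, one shows $|X+Y| \ge |X| + |Y| + \min(t, \text{something involving } n - |X| \text{ and } n')$; combined with $|X+Y| = |X|+|Y|+\mu$ and $\mu \le \min(|X|,|Y|) - 3$, this pins down $t \le \mu$, i.e. $|Y| \ge n'+1-\mu$, which is the conclusion for $Y$. (3) Once $Y$ is known to fill all but $\mu$ of $\{0,\dots,n'\}$, a short additional argument (using $|X+Y| = |X|+|Y|+\mu$ again, now with $Y$ almost-full) forces $\max X = n \le |X| + \mu$ as well, giving the conclusion for $X$.

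The hardest step will be step~(2): making the "holes cost elements in the sumset" heuristic into a clean inequality that is simultaneously tight enough to extract $t \le \mu$ and robust to the asymmetry $n \le n'$. One has to handle carefully the boundary cases where one of the sets is very short (close to size $\mu+4$, the minimal allowed), since then the ambient interval arguments are delicate; this is precisely why the hypothesis requires that \emph{one} of the two sets have size at least $\mu+4$ rather than both, and the proof must exploit the longer set to drive the argument while only using minimal information about the shorter one. A cleaner alternative, which I would try first, is to deduce the two-set statement from the one-set $(3k-4)$ theorem applied to $Z = X \cup (c + Y)$ for a suitable shift $c$ (or to $2X$ after relating $X+Y$ to $X+X$ via Pl\"unnecke--Ruzsa-type inequalities); if $|X+Y|$ small can be shown to force $|Z+Z|$ small with $|Z| \approx |X|+|Y|$ and an admissible doubling, then Freiman's theorem in $\Z$ gives an AP covering $Z$, hence covering both $X$ and $Y$, and one only has to check the length bookkeeping matches $|X|+\mu+1$ and $|Y|+\mu+1$. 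The obstruction there is controlling the doubling constant of $Z$ sharply enough, so in practice the direct interval argument of step~(2) is likely unavoidable.
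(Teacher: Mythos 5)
The paper never proves Theorem~\ref{thm:ls2}: it is imported as a black box from Lev and Smeliansky \cite{ls} (the paper also points the reader to \cite[Th.~5.12]{tv}) and then applied. So there is no internal proof to compare against, and you are right to flag that; what remains is to judge your sketch on its own terms.

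Judged that way, it has real gaps beyond the ones you acknowledge. Your step~(1), billed as ``the easy direction,'' claims $|X+Y|\geq n'+|X|$ after the normalization $\min X=\min Y=0$, $n=\max X\leq n'=\max Y$, $\gcd$ of all differences equal to $1$. That inequality is simply false: take $X=\{0,2\}$, $Y=\{0,1,20\}$, so $n=2$, $n'=20$, $\gcd=1$, yet $|X+Y|=|\{0,1,2,3,20,22\}|=6<22=|X|+n'$. A bound of this shape only becomes available after one already knows enough about the structure of $X$ and $Y$, so it cannot be dispatched as a free preliminary. Step~(2), which you correctly identify as the crux, is left entirely heuristic --- ``$\min(t,\text{something involving }n-|X|\text{ and }n')$'' is not an inequality, and turning the ``holes in $Y$ cost elements of $X+Y$'' slogan into something sharp enough to extract $t\leq\mu$ \emph{is} the theorem. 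Your proposed shortcut, applying the one-set $(3k-4)$ theorem to $Z=X\cup(c+Y)$, is a known dead end for the sharp constant: $Z+Z\supseteq (X+X)\cup(Y+Y)$, and $|X+X|$, $|Y+Y|$ are not controlled by $|X+Y|$ alone, so the doubling of $Z$ need not stay below $3$. For the record, the actual Lev--Smeliansky argument takes a different route: after normalizing both sets to start at $0$, reduce modulo the larger diameter $h=\max(\max X,\max Y)$, observe that the longer set loses exactly one point under this projection, apply Kneser's theorem to the projected sumset in $\Z/h\Z$, and split into cases according to the Kneser period; this is what yields the two length bounds $|X|+\mu+1$ and $|Y|+\mu+1$ simultaneously. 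To turn your sketch into a proof you would need either to carry out that Kneser reduction or to replace step~(2) by a genuine, correctly stated interval/hole inequality.
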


The second result we shall use is due to Bilu, Lev and Ruzsa
\cite[Theorem 3.1]{blr}\footnote{In \cite{blr} their statement is
slightly different from Theorem \ref{thm:rectlev}, 
but this is actually what they prove.} 
 and gives    a bound on the length of small
sets in $\Z/p\Z$. By the {\em length}  $\ell (X)$ of a set $X\subset
\Z/p\Z$ we mean the length (cardinality) of the shortest arithmetic progression
which contains $X$.

\begin{theorem}[Bilu, Lev, Ruzsa \cite{blr}]\label{thm:rectlev}
Let $X\subset \zp$ with $|X|\le \log_4 p$. Then $\ell (X) < p/2$.
\end{theorem}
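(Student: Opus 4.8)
The plan is to bound the length of a set $X \subset \Z/p\Z$ with $|X| \le \log_4 p$ by first passing to $\Z$. Since $|X|$ is small compared to $p$, I would like to find a nonzero multiple $\lambda X$ (with $\lambda \in (\Z/p\Z)^*$) that lives inside a short interval, lift it to $\Z$, apply Freiman's $(3k-4)$ theorem or a direct combinatorial argument there, and then pull the arithmetic progression back to $\Z/p\Z$; multiplying by a unit does not change the length of a set.

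First I would invoke a Dirichlet-type pigeonhole / geometry-of-numbers argument: for any set $X$ of size $k$ in $\Z/p\Z$, there is a nonzero $\lambda$ such that $\lambda X$ is contained in an interval of length roughly $p/k^{1/(k-1)}$, or more crudely, one can iterate the observation that among the nonzero dilates of a two-element difference one can control gaps. A cleaner route: fix two distinct elements $a,b \in X$, set $d = b-a$, and consider the dilate that sends $d$ to $1$; then study the resulting set's ``diameter'' as a subset of $\Z/p\Z$ realized on $\{0,1,\dots,p-1\}$. One then argues that if this diameter were at least $p/2$, the doubling $|X+X|$ together with the size constraint $|X| \le \log_4 p$ would force a contradiction via a covering/counting estimate — essentially, a set that wraps around more than halfway must, after a suitable rotation, be split into two far-apart pieces, and Kneser or Cauchy--Davenport-type bounds on each piece then give $\ell(X) < p/2$.

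The key steps, in order, are: (1) reduce to showing that some nonzero dilate $\lambda X$ fits inside an interval of length $< p/2$, using that dilation preserves $\ell$; (2) among all nonzero $\lambda$, pick one minimizing the ``span'' of $\lambda X$ inside $\{0,\dots,p-1\}$, and show by an averaging argument over $\lambda$ that this minimal span is at most $p\,(|X|-1)/(p-1) \cdot (\text{small factor})$, which is $< p/2$ once $|X| \le \log_4 p$ and $p$ is large; (3) conclude $\ell(X) \le \text{span}(\lambda X) + 1 < p/2$. Alternatively, if one prefers the structural route, one uses the fact that a set of size $k$ in $\Z/p\Z$ whose shortest containing progression has length $\ge p/2$ must have a specific ``bilateral'' shape, and then a direct estimate on the number of residues covered contradicts $k \le \log_4 p$.

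The main obstacle I anticipate is step (2): getting the averaging over dilates $\lambda$ to yield a span strictly below $p/2$ rather than merely $O(p)$. The naive expectation is that a random dilate spreads $X$ roughly uniformly, so the span is close to $p$, not small; one must instead count, for each $\lambda$, the length of the \emph{longest gap} in $\lambda X$ (viewed cyclically) and show that the average longest gap over $\lambda$ exceeds $p/2$ — equivalently that on average $\lambda X$ leaves a cyclic arc of length $> p/2$ uncovered. This is where the smallness $|X| \le \log_4 p$ is essential: the expected number of dilates for which a fixed arc of length $p/2$ is hit by $X$ is controlled by $|X|^2/p$, and summing over a net of arcs shows that \emph{some} $\lambda$ avoids a half-circle, giving the bound. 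Making this pigeonhole precise, with the correct constant $4$ in $\log_4 p$, is the delicate part; everything after the reduction to $\Z$ is routine.
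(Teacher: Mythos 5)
The paper itself does not prove this statement: it is quoted (in slightly reworded form, as the paper's footnote acknowledges) from Bilu--Lev--Ruzsa, so the comparison has to be with their argument. Your step (1) is sound: multiplication by a unit maps arithmetic progressions to arithmetic progressions, so $\ell(\lambda X)=\ell(X)$ and it suffices to find one nonzero dilate of $X$ inside a short interval; your opening mention of a Dirichlet-type pigeonhole over dilates is indeed the right idea. But the heart of the matter, your step (2), is both left unproven (you say so yourself) and propped up by incorrect claims. The minimal span over dilates of a $k$-element set is not of order $p(|X|-1)/(p-1)$: for a typical $k$-set the best dilate has span about $p^{1-1/(k-1)}$, a power of $p$, not $O(k)$, so no averaging can deliver the bound you state. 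Likewise ``the expected number of dilates for which a fixed arc of length $p/2$ is hit by $X$ is controlled by $|X|^2/p$'' is wrong (almost every dilate meets any fixed half-circle), and the events $\lambda x_i\in\mathrm{arc}$ are far from independent as $\lambda$ varies, so the longest-gap averaging does not go through as sketched. The detour through a doubling bound and Freiman's $(3k-4)$ theorem is also a dead end here: nothing in the hypothesis controls $|2X|$, and normalizing a single difference $b-a$ to $1$ says nothing about the rest of the set.

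What actually closes the gap --- and is exactly where the base $4$ comes from --- is the box pigeonhole in dimension $k=|X|$: write $X=\{x_1,\dots,x_k\}$ and consider the $p$ points $v_\lambda=(\{\lambda x_1/p\},\dots,\{\lambda x_k/p\})\in[0,1)^k$ for $\lambda=0,1,\dots,p-1$. Partition $[0,1)^k$ into $4^k$ boxes of side $1/4$; since $k\le\log_4 p$ and $p$ is an odd prime, $4^k<p$, so two points $v_{\lambda_1},v_{\lambda_2}$ with $\lambda_1\neq\lambda_2$ lie in the same box. Then $\lambda=\lambda_1-\lambda_2\neq 0$ satisfies $\|\lambda x_i/p\|<1/4$ for every $i$, so $\lambda X$ lies in an interval of length about $p/2$ around $0$, and by dilation invariance so does a dilate-image of $X$; the strict inequality $\ell(X)<p/2$ in the precise form used here is what BLR's Theorem 3.1 furnishes (this is the ``slight rewording'' the footnote refers to). So the skeleton of your plan points in the right direction, but the single step carrying all the content is missing, and the quantitative heuristics offered in its place are not correct.
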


Theorem \ref{thm:rectlev} will be used to show that, when $m$ is
small enough, then the atom $A$ is contained in a short arithmetic
progression.

\begin{lemma}\label{lem:rectasmall} Suppose that
$6m+11\le   \log_4 p$. Then   $A$ is contained in an arithmetic
progression of length $2(|A|-1)$.
\end{lemma}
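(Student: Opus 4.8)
The plan is to work with a $2$-atom $B$ of $A$ (containing $0$) and leverage the smallness of everything in sight to force the sums to behave as over $\Z$. First I would record that, by Lemma~\ref{lem:m+3} applied to $A$, the $2$-atom $B$ satisfies $|B|\le m'+3$ where $m'=\kappa_2(A)-|A|\le m$ (the bound $\kappa_2(A)\le|A|+m$ coming from the $2$-separability witnessed by a translate of $A$ itself, or from \eqref{eq:ubsa} combined with the intersection property as in the proof of Lemma~\ref{lem:3m+5}). Hence $|B|\le m+3$ and, by \eqref{eq:|A|}, $|A|\le 3m+5$. The key point is that $|A|+|B|\le 4m+8$ and, more importantly, the relevant Freiman-type quantities will all be bounded in terms of $m$.

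Next I would bound the length of $A+B$ (equivalently of $A$) by applying Theorem~\ref{thm:rectlev} to a suitable small set. Consider the sumset $A+B$: one has $|A+B|\le |A|+|B|+m\le (3m+5)+(m+3)+m=5m+8$, and since the sets we care about have size at most something like $6m+11\le\log_4 p$, Theorem~\ref{thm:rectlev} yields $\ell(A+B)<p/2$, and similarly $\ell(A),\ell(B)<p/2$. A set $X\subset\Z/p\Z$ with $\ell(X)<p/2$ can be rectified: there is an affine map identifying $X$ (and indeed $X$ together with the relevant sumsets, provided their lengths are also $<p/2$) with a set of integers so that sumsets are preserved — this is exactly the point of controlling the lengths of $A$, $B$ and $A+B$ simultaneously. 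So I would choose the hypothesis $6m+11\le\log_4 p$ precisely so that $A\cup B\cup(A+B)$, or rather the arithmetic-progression hulls involved, all have length below $p/2$ and the addition $A+B$ in $\Z/p\Z$ faithfully mirrors addition in $\Z$.

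Once $A$ and $B$ are rectified, I would apply the Lev–Smelianski theorem (Theorem~\ref{thm:ls2}) to the integer sets corresponding to $A$ and $B$, with $\mu=m'\le m$. The hypothesis $\mu\le\min\{|A|,|B|\}-3$ needs checking: we have $|B|\ge$ (something), but this is the delicate spot — if $|B|$ is as small as, say, $2$ or $3$, the theorem does not apply directly. I expect to handle this by noting that if $|B|$ is very small then $\kappa_2(A)$ is small and $A$ itself is nearly an arithmetic progression by Vosper-type or the $m=0,1$ results already cited, or alternatively by observing that when $|B|\le m+3$ and $\mu\le|B|-3$ forces $\mu$ small, one can iterate. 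Assuming Theorem~\ref{thm:ls2} applies, it gives that $A$ (in $\Z$, hence in $\Z/p\Z$ via the rectification) lies in an arithmetic progression of length $|A|+\mu+1\le |A|+m+1$. Finally I would convert $|A|+m+1$ into the stated bound $2(|A|-1)=2|A|-2$: since $\mu=\kappa_2(A)-|A|$ and one always has $\kappa_2(A)\le 2|A|-?$… here I would use that $A$ is a $2$-atom, so $\kappa_2(A)\le |A+A|-|A|$ with $A+A$ itself $2$-separable, giving $\mu\le |A|-2$ (or a slightly weaker bound), whence $|A|+\mu+1\le 2|A|-1$; a parity or extremal-case check sharpens this to $2|A|-2=2(|A|-1)$.

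The main obstacle I anticipate is twofold: (a) making the rectification rigorous, i.e. verifying that controlling $\ell(A),\ell(B),\ell(A+B)<p/2$ genuinely licenses treating $A+B$ as a sum of integer sets — this requires a Freiman-isomorphism/rectification lemma that must either be quoted or proved from Theorem~\ref{thm:rectlev}; and (b) the boundary case where $|B|$ (the $2$-atom of $A$) is too small for Theorem~\ref{thm:ls2}'s size hypothesis, which will need a separate, hopefully short, argument using the already-known small-$m$ cases of the conjecture or a direct analysis of $A$.
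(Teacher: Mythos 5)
Your overall approach—introduce an auxiliary atom $B$ of $A$, use Theorem~\ref{thm:rectlev} to place $A\cup B$ in a short progression so that $A+B$ behaves as a sum of integers, then invoke Theorem~\ref{thm:ls2}—is exactly the paper's strategy. But the choice of a \emph{$2$-atom} is the wrong choice, and the gap you flag under point~(b) is genuine, not a technicality you can wave away. A $2$-atom of $A$ may have size as small as $2$, in which case the hypothesis $\mu\le\min\{|A|,|B|\}-3$ of Theorem~\ref{thm:ls2} fails outright, and your suggested fallback (``use the known $m=0,1$ cases or iterate'') is not an argument: those results concern the structure of a set given bounds on \emph{its own} doubling, and they do not recover a structure theorem for $A$ from the smallness of $|B|$.

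The paper's fix is simple and worth internalizing: take $B$ to be an \emph{$(m+4)$-atom} of $A$ rather than a $2$-atom. (One first checks $A$ is $(m+4)$-separable using $|S|\ge p/35$, \eqref{eq:ubsa} and \eqref{eq:|A|}.) Then by definition $|B|\ge m+4$, so $\mu\le m\le|B|-4<|B|-3$ automatically, and $|B|\ge\mu+4$ too; the size condition of Theorem~\ref{thm:ls2} is satisfied for free. Lemma~\ref{lem:3m+5} with $k=m+4$ still gives $|B|\le 3m+6$, so $|A\cup B|\le(3m+5)+(3m+6)=6m+11\le\log_4 p$, which is precisely why that threshold appears in the statement. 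Two further remarks: (i) your worry in~(a) about needing an extra ``rectification lemma'' is overthought—once $A\cup B$ lies in a progression of length $<p/2$, normalize its difference to $1$ and translate into $[0,p/2)$; then $A+B\subset[0,p)$ has no wrap-around, and that is all ``considered as a sum of integers'' requires; (ii) your final conversion to $2(|A|-1)$ is unnecessarily circuitous: $A$ is an $(m+3)$-atom of $S$, so $|A|\ge m+3$, hence $m\le|A|-3$ and $|A|+m+1\le 2|A|-2=2(|A|-1)$ directly.
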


\begin{proof}
Since we assume $|S|\ge p/35$, it follows from  \eqref{eq:ubsa} and
\eqref{eq:|A|}   that  $A$ is an $(m+4)$--separable set. Let
therefore $B$ be an $(m+4)$--atom of $A$ containing $0$, so that
$|B+A|\le |B|+|A|+m$. By Lemma~\ref{lem:3m+5} we have $|B|\le 3m+6$
so that $|A\cup B|\le 6m+11$. By the present lemma's hypothesis, it
follows from Theorem~\ref{thm:rectlev} that $A\cup B$ is contained
in an arithmetic progression of length less than $p/2$. 
The sum $A+B$ can therefore be considered as a sum of integers, so that
Theorem~\ref{thm:ls2} applies and $A$ is contained in an arithmetic
progression of length $|A|+m+1\le 2|A|-2$.
\end{proof}

We now proceed to deduce from Lemma~\ref{lem:rectasmall} the structure
of $S$. It will be convenient to introduce the following notation.

Recall that we denote by $\ell (X)$ the length of the smallest
arithmetic progression containing $X$. By $\ell_X(Y)$ we shall
denote the length of a smallest arithmetic progression of difference
$x$ containing $Y$, where $x$ is the difference of a shortest
arithmetic progression containing $X$.

The point of the above definition is that if we have
$\ell_A (S)+\ell (A)\leq p$ then the sum $S+A$ can be considered
as a sum in $\Z$, so that (\ref{eq:ubsa}) and Theorem
\ref{thm:ls2} applied to $S$ and $A$ imply Theorem
\ref{thm:main}. We summarize this point in the next Lemma for future reference.

\begin{lemma}
  \label{lem:l(S)+l(A)}
If we can assume $\ell_A (S)+\ell (A)\leq p$ then Theorem \ref{thm:main} holds.
\end{lemma}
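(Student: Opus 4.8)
The plan is to work with the arithmetic progression already supplied for $A$ and show that $S$ must essentially line up with it. Let $P$ be a shortest arithmetic progression containing $A$, with difference $x$ and length $\ell(A)$. The hypothesis $\ell_A(S) + \ell(A) \le p$ tells us precisely that $S$ fits inside an arithmetic progression $Q$ of the same difference $x$ whose length is $\ell_A(S)$, and that $|P| + |Q| \le p$. First I would apply a (rotation and) dilation by $x^{-1}$ so that the common difference becomes $1$; this identifies $P$ and $Q$ with intervals of integers, and more importantly lets us regard the sumset $S+A$ as a sum of two sets of integers sitting inside an interval of length $\ell_A(S) + \ell(A) - 1 \le p-1 < p$. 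Since this interval has fewer than $p$ elements, the natural reduction map from these integers to $\Z/p\Z$ is injective, so $|S+A|$ computed in $\Z/p\Z$ equals $|X+Y|$ computed in $\Z$, where $X,Y \subset \Z$ are the integer lifts of $S,A$ respectively.

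Next I would invoke the bound \eqref{eq:ubsa}, namely $|S+A| \le |S| + |A| + m$, which now reads $|X+Y| \le |X| + |Y| + m$ as an inequality about integers. To apply Theorem \ref{thm:ls2} with $\mu = m$ we must check the two hypotheses: that $m \le \min\{|X|,|Y|\} - 3$ and that one of the two sets has size at least $m+4$. For this recall that $|X| = |S|$ and $|Y| = |A|$, and that by \eqref{eq:|A|} we have $|A| \le 3m+5$, while $A$ is an $(m+3)$-atom so $|A| \ge m+3$; combined with condition (ii) of Theorem \ref{thm:main} ($m \le |S|-4$) and the running assumption $|S| \ge p/35$, the set $X$ with $|X| = |S|$ certainly satisfies $|X| \ge m+4$ and $m \le |X| - 3$, so the size condition on $Y$ is not needed — Theorem \ref{thm:ls2} applies with $X$ playing the role of the large set. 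Its conclusion is that $X$ (hence $S$) is contained in an arithmetic progression of length $|X| + m + 1 = |S| + m + 1$. Undoing the dilation to return to $\Z/p\Z$ preserves the length of this progression, which is exactly the conclusion of Theorem \ref{thm:main}.

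The only genuinely delicate point is the passage from $\Z/p\Z$ to $\Z$: one must be sure that the interval containing $S+A$ really has length strictly less than $p$ so that reduction mod $p$ is injective on it, and that the value of $m = |2S| - 2|S|$ is the same whether the doubling constant of $S$ is measured in $\Z/p\Z$ or — after lifting — in $\Z$. Both are immediate from the hypothesis $\ell_A(S) + \ell(A) \le p$ together with the fact that $S$ and $A$ sit in progressions of the \emph{same} difference, which is why the definition of $\ell_A(\cdot)$ was set up as it was; I would simply spell this out in a sentence. Everything else is a direct citation of Theorem \ref{thm:ls2} and inequalities \eqref{eq:ubsa}--\eqref{eq:|A|} already in hand.
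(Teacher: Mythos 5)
Your proof is correct and follows essentially the same route the paper takes: the paper's own justification (given in the paragraph immediately preceding the lemma, not a separate proof environment) is precisely that $\ell_A(S)+\ell(A)\le p$ lets $S+A$ be viewed as a sum in $\Z$, after which (\ref{eq:ubsa}) and Theorem~\ref{thm:ls2} give the progression of length $|S|+m+1$. The one small quibble is your closing worry about whether $m=|2S|-2|S|$ changes under lifting: this is a non-issue, since Theorem~\ref{thm:ls2} is applied to the pair $X,Y$ with $\mu$ determined by $|X+Y|$, and all you need is $|X+Y|=|S+A|$ (which your injectivity argument supplies) together with $\mu\le m$; the value of $|2S|$ computed in $\Z$ never enters.
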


Whenever we will wish transfer the structure of $A$ to $S$ we will
assume that $\ell_A (S)+\ell(A)> p$ and look for a contradiction.
We can think of this hypothesis as $S$ having no `holes' of
length $\ell(A)$. In the present case of very small $m$, the desired
result on $S$ follows with very little effort.

\begin{lemma}\label{lem:rectsasmall}
Suppose that $ 6m+11\le  \log_4 p $. Then $S$ is contained in an
arithmetic progression of length $|S|+m+1$.
\end{lemma}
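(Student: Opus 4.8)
The plan is to combine Lemma~\ref{lem:rectasmall}, which already places the atom $A$ inside an arithmetic progression of length $2(|A|-1)$, with the ``no holes'' dichotomy of Lemma~\ref{lem:l(S)+l(A)}. By Lemma~\ref{lem:l(S)+l(A)} it suffices to rule out the case $\ell_A(S)+\ell(A)>p$, so I assume this strict inequality and aim for a contradiction. After applying a dilation I may assume the difference of a shortest progression containing $A$ is $1$, so that $A\subset[0,\ell(A)-1]$ with $\ell(A)=\ell_1(A)\le 2(|A|-1)\le 2(3m+4)$ by \eqref{eq:|A|}; the hypothesis $6m+11\le\log_4 p$ makes this quantity utterly negligible compared with $p$.

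First I would iterate the sumset bound to control $\ell_1(S)$ in terms of $\ell_1(A)$. Since $0\in A$ and $A$ contains two points at distance $\ell(A)-1$ in $\Z/p\Z$, the set $S+A$ contains, for each $x\in S$, the two points $x$ and $x+(\ell(A)-1)$; more usefully, $S+A\supseteq S\cup(S+d)$ where $d$ runs over the set of differences realized in $A$. The inequality \eqref{eq:ubsa} says $|S+A|\le|S|+|A|+m$, so $S+A$ is only $|A|+m\le 4m+4$ larger than $S$. Now the key point: if $S$ had a ``hole'' of length $\ell(A)$, i.e. a run of $\ell(A)$ consecutive elements of $\Z/p\Z$ missing from $S$ in the cyclic order, one can show that translating $S$ by the generator of $A$'s progression would force $|S+A|$ to exceed $|S|+|A|+m$. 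Concretely, writing $A\subseteq\{0=a_0<a_1<\dots<a_t\}$ with $a_t=\ell(A)-1\le 2(|A|-1)-1$, the set $S+A$ contains $\bigcup_i (S+a_i)$, and a counting argument on the boundary of $S$ (the number of ``interval components'' of $S$ as a cyclic subset) shows that each hole of length at least $\ell(A)$ contributes at least $|A|$ new elements beyond $|S|$ minus an overlap term, but the whole set $S+A$ can afford only $|A|+m$ new elements. Thus the number of holes of length $\ge\ell(A)$ is bounded by a small constant, and with a little more care there can be essentially no such hole once $\ell_A(S)$ is forced to be large.

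More precisely, here is the clean contradiction I expect to run: assume $\ell_1(S)>p-\ell(A)$. Then the complement $\Z/p\Z\setminus S$ is contained in an arithmetic progression of difference $1$ of length $\ell(A)$ (this is exactly what $\ell_1(S)>p-\ell(A)$ says), so $\Z/p\Z\setminus S$ is an interval $I$ with $|I|=p-|S|$. Then $S=\Z/p\Z\setminus I$ is itself an interval of length $|S|$, hence already an arithmetic progression of length $|S|\le|S|+m+1$, and we are done --- in fact $S$ being an interval makes $2S$ an interval of length $2|S|-1$, i.e. $m=-1$. So the only way Lemma~\ref{lem:l(S)+l(A)} does not directly apply is when $\ell_A(S)>p-\ell(A)$, i.e. $S$ fills all of $\Z/p\Z$ except a progression of difference $1$ (namely $A$'s difference) shorter than $\ell(A)$; but then, as just noted, $S$ is covered by a progression of difference $1$ of length $\ell_A(S)\le p-1$, and I claim $\ell_A(S)\le|S|+m+1$. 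This last inequality is forced by \eqref{eq:ubsa}: if $S$ is covered by a progression $P$ of difference $1$ and length $L=\ell_1(S)$, then $S+A$ is covered by a progression of difference $1$ and length $L+\ell(A)-1$, while $|S+A|\ge 2L-|S|$ by the Cauchy--Davenport-in-$\Z$ / interval argument (the sum of two subsets of intervals of lengths $L$ and $\ell(A)$ has size at least $L+\ell(A)-1$ whenever it fits in $\Z$, but here we need the lower bound $|S+A|\ge |S|+(\ell_1(S)-|S|)\cdot 1$-type estimate); combining with $|S+A|\le|S|+|A|+m$ yields $\ell_1(S)\le|S|+|A|+m-(\ell(A)-1)\le|S|+m+1$ after using $\ell(A)\ge|A|$. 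That gives the progression of length $|S|+m+1$ containing $S$, completing the proof.

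The main obstacle will be the precise bookkeeping in that last paragraph: making rigorous the statement that ``once $A$ sits in a short interval, either $S$ has no long hole (whence $\ell_A(S)+\ell(A)\le p$ and Lemma~\ref{lem:l(S)+l(A)} finishes), or $S$ itself lives in an interval barely longer than $|S|$.'' The cleanest route is probably to avoid ad hoc hole-counting and instead argue: by Lemma~\ref{lem:l(S)+l(A)} we may assume $\ell_A(S)+\ell(A)>p$; since $\ell(A)\le 2(|A|-1)$ is tiny, this means $\ell_A(S)>p-2(|A|-1)$, so $\Z/p\Z\setminus S$ lies in a $1$-progression (after dilation) of length $<2(|A|-1)$; hence $S$ contains a $1$-progression of length $>p-2(|A|-1)$, and then one shows directly from \eqref{eq:ubsa} that $S$ must in fact be contained in a $1$-progression of length $|S|+m+1$, because an interval minus a subset, when summed with $A\ni\{0,\ell(A)-1\}$, grows by at least the length of the gap unless the gap is already $\le|A|+m-\ell(A)+1\le m+1$. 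I expect this to go through with $\epsilon$ and $p_0$ irrelevant in this regime, using only $6m+11\le\log_4 p$ and $|S|\ge p/35$.
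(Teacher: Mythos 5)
Your proposal hinges on a mistaken translation of the ``no-holes'' hypothesis. After invoking Lemma~\ref{lem:l(S)+l(A)} you assume $\ell_A(S)+\ell(A)>p$, i.e.\ $\ell_1(S)>p-\ell(A)$ after normalizing the difference to $1$. You then assert that this ``is exactly'' the statement that $\Z/p\Z\setminus S$ is contained in an interval of length $\ell(A)$, and conclude that $S$ itself is an interval. That is a dual confusion: $\ell_1(S)>p-\ell(A)$ says only that the \emph{longest} maximal gap in $S$ has length $<\ell(A)$; it says nothing about the complement lying in a single short interval (that would be the statement that $\ell_1(\Z/p\Z\setminus S)$ is small, equivalently that $S$ \emph{contains} a long interval). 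In general $\Z/p\Z\setminus S$ will consist of many small intervals scattered around $\Z/p\Z$, and $S$ is nowhere near being a single interval; your ``so $m=-1$ and we are done'' step is unjustified. The same slip reappears in your final paragraph where you write ``$\ell_A(S)>p-2(|A|-1)$, so $\Z/p\Z\setminus S$ lies in a $1$-progression of length $<2(|A|-1)$''. Because of it, the rest of your argument (covering $S$ by a single progression and applying the interval estimate) never gets off the ground.

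What is missing is exactly the quantitative ``hole-counting'' you set aside as ad hoc. The paper's proof uses Lemma~\ref{lem:rectasmall} to translate $A$ so that $\{0,1\}\subset A$, writes $S=S_1\cup\cdots\cup S_k$ as a union of $k$ maximal $1$-progressions, and notes $|S+A|\ge|S|+k$, whence $k\le|A|+m$ by \eqref{eq:ubsa}. Under the no-holes assumption every one of the $k$ gaps has length $<\ell(A)$, so the total span is $\ell_A(S)+\ell(A)\le|S|+k\,\ell(A)\le|S|+(|A|+m)\cdot 2(|A|-1)$, which via $|A|\le 3m+5$ and $6m+11\le\log_4 p$ is $<|S|+(\log_4 p)^2<p$, a contradiction. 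In short: the proof must account for \emph{all} the gaps (there can be up to $|A|+m$ of them), not assume there is just one; replacing your single-interval claim with this component count would repair the argument and recover the paper's proof.
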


\begin{proof} By Lemma \ref{lem:rectasmall},
$A$ is contained in an arithmetic progression
of difference $r$, that
we can assume to equal $r=1$, and of length $2(|A|-1)$.
In particular $A$
has two consecutive elements. Without loss of generality we may
replace $A$ by a translate of $A$ and
assume that $\{ 0,1\}\subset A$.  Let $S=S_1\cup\cdots \cup S_k$ be
the decomposition of $S$ into maximal arithmetic progressions of
difference one, so that
$$
|S+A|\ge |S|+k.
$$
Because of (\ref{eq:ubsa}) we have $k\le |A|+m$.
By Lemma~\ref{lem:l(S)+l(A)} we can assume
every maximal
arithmetic progression in the complement of $S$ to have length at most
$\ell (A)$. Therefore, 
$$
\ell_A (S) +\ell(A)\le |S|+k\ell (A)\le |S|+(|A|+m)2(|A|-1).
$$
Now by \eqref{eq:|A|} we get
$$\ell_A (S) +\ell(A)\le |S|+(4m+5)(6m+8)<
|S|+(\log_4p )^2<\frac p2 + ( \log_4p)^2$$ since $|S|<p/2$. 
We have $\log_4^2p< p/2$ for all $p$ therefore 
we get $\ell_A (S) +\ell(A)< p$, a contradiction.
\end{proof}

\section{The general case}
\subsection{Overview}
When $m$ grows we encounter two difficulties. First,
Theorem~\ref{thm:rectlev} will not apply anymore to any set
containing $A$, and we need an alternative method to argue that $A$
is contained in a short arithmetic progression.
Second, even if we do manage to prove that $A$ is contained in a short
arithmetic progression, we will not be able to deduce the structure of $S$ from
\eqref{eq:ubsa} by the simple technique of the preceding section.

We will now use an extra tool, namely the Pl\"unecke-Ruzsa estimates
for sumsets; see e.g. \cite{ruzsa, Nat}.

\begin{theorem}[Pl\"unecke-Ruzsa \cite{ruzsa}]\label{thm:pr} Let $S$ and $T$ be
finite subsets of an abelian group with $|S+T|\le c|S|$. There is a
nonempty subset $S'\subset S$ such that
$$
|S'+jT|\le c^{j}|S'|.
$$
\end{theorem}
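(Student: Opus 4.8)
The plan is to deduce the statement for general $j$ from the case $j=2$ by iteration, and to prove the latter by the elementary counting argument of Petridis, which sidesteps Pl\"unecke's original route through commutative layered graphs, magnification ratios and Menger's theorem. (That classical route is an alternative worth keeping in mind: one forms the layered graph whose vertex classes are copies of the group with edges $x\mapsto x+t$, $t\in T$, defines the magnification ratios $D_i$, proves the submultiplicativity $D_{i+1}\le D_1 D_i$ — this is Pl\"unecke's inequality, the one genuinely hard step — and then reads off a suitable $S'$ from the definition of $D_j\le D_1^{\,j}\le c^{\,j}$.)

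For the Petridis argument, first choose, among all nonempty subsets $X\subseteq S$, one that minimizes the ratio $K:=|X+T|/|X|$. Since $S$ itself is a candidate, $K\le |S+T|/|S|\le c$. Everything then rests on the following claim, to be proved by induction on $|C|$: for every finite set $C$ in the ambient group,
$$|X+T+C|\le K\,|X+C|.$$
The base case $|C|=1$ is translation invariance together with the definition of $K$.

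For the inductive step, write $C=C'\cup\{c\}$ with $c\notin C'$, and split $X$ according to whether $x+c$ lies outside $X+C'$: put $X_1=\{x\in X:x+c\notin X+C'\}$ and $X_2=X\setminus X_1$. Since $X_2+c\subseteq X+C'$ while $X_1+c$ is disjoint from $X+C'$, one obtains the exact count $|X+C|=|X+C'|+|X_1|$. Next, $X_2+T+c\subseteq X+T+C'$ (if $x+c\in X+C'$ then $x+t+c\in X+T+C'$ for every $t\in T$), so every element of $X+T+c$ not already in $X+T+C'$ lies in $(X+T+c)\setminus(X_2+T+c)$, a set of size $|X+T|-|X_2+T|$. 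Now invoke the minimality of $K$: $|X+T|=K|X|$, and $|X_2+T|\ge K|X_2|$ since $X_2\subseteq S$ (the case $X_2=\emptyset$ being trivial). Hence that size is at most $K(|X|-|X_2|)=K|X_1|$, and therefore
$$|X+T+C|\le |X+T+C'|+K|X_1|\le K|X+C'|+K|X_1|=K|X+C|,$$
which closes the induction. Finally, set $S':=X$ and apply the claim with $C=(j-1)T$ to get $|S'+jT|\le K|S'+(j-1)T|$; iterating $j$ times yields $|S'+jT|\le K^{\,j}|S'|\le c^{\,j}|S'|$, as required.

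The only point that needs care is the bookkeeping in the inductive step: choosing the splitting set $X_1$ so that the new elements of $X+T+C$ are controlled by exactly the part of $X+T$ lost in passing from $X$ to $X_2$, and making sure the minimality of $K$ is used in the direction $|X_2+T|\ge K|X_2|$ and not the reverse. Everything else is formal.
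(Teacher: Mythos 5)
The paper does not prove this theorem: it is quoted from Ruzsa \cite{ruzsa} and used as a black box, so there is no in-paper proof to compare against. Your argument is correct; it is the Petridis induction on $|C|$ that establishes $|X+T+C|\le K\,|X+C|$ for a ratio-minimizing nonempty $X\subseteq S$, and the bookkeeping all checks out: the exact identity $|X+C|=|X+C'|+|X_1|$, the inclusion $X_2+T+c\subseteq X+T+C'$, the bound $|(X+T+c)\setminus(X+T+C')|\le |X+T|-|X_2+T|$, and minimality of $K$ invoked in the right direction via $|X_2+T|\ge K\,|X_2|$. Taking $C=(j-1)T$ and iterating then gives $|X+jT|\le K^j|X|\le c^j|X|$.

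This is a genuinely different route from the one in the cited source. Ruzsa's 1989 proof (which the paper implicitly relies on) runs through Pl\"unnecke's commutative layered graph, the magnification ratios $D_i$, the submultiplicativity $D_{i+1}\le D_1 D_i$, and ultimately Menger's theorem. The Petridis argument you give is more elementary and entirely self-contained at the level of finite sets. What Ruzsa's graph-theoretic route buys is greater generality (Pl\"unnecke's inequality for arbitrary commutative layered graphs, not merely sumset graphs), but that extra generality plays no role in this paper, where only the sumset statement is used. Either proof suffices here.
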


The Pl\"unecke-Ruzsa inequalities applied to $S$ and $A$ will give
us that there exists a positive $\delta$ such that either $A$ is
contained in a progression of length $(2-\delta)(|A|-1)$ or $2A$ is
contained in an arithmetic progression of length
$(2-\delta)(|2A|-1)$ (Lemma~\ref{lem:rectabis}). We will then
proceed to transfer the structure of $A$ or $2A$ to $S$.

Again we shall use Lemma~\ref{lem:l(S)+l(A)} to assume  that  $S$
does not contain a  ``gap'' of length $\ell (A)$ or $\ell (2A)$. We
define the density of a set $X\subset\Z/p\Z$ as $\rho
(X)=(|X|-1)/\ell (X)$. If $\ell (A)\le (2-\delta)(|A|-1)$ we will
argue that the sum $S+A$ must have a {\em density} at least that of
$A$ and get a contradiction with the upper bound on $|S+A|$. The
details will be given in Subsection \ref{subsec:l(A)}.

We will not be quite done however, because we can not guarantee that
$\ell(A)\leq (2-\delta)(|A|-1)$ holds. In that case we have to fall
back on the condition $\ell(2A)\leq (2-\delta)(|2A|-1)$, meaning
that it is the set $2A$, rather than $A$, that has large enough
density. In this case we have to work a little harder. We proceed in
two steps: we first apply the Pl\"unecke-Ruzsa inequalities again to
show that there exists a {\em large} subset $T$ of $S$ such that
$|T+2A|$ is small. We then apply the density argument to show that
$T$ must be contained in an arithmetic progression with few missing
elements.  We then focus on the remaining elements of $S$, i.e. the
set $S\setminus T$. We will again argue that if this set has a gap
of length $\ell(A)$ the desired result holds and otherwise the
density argument will give us that $S+A$ is too large. This analysis
is detailed in Subsection \ref{subsec:l(2A)} and will conclude our
proof of Theorem~\ref{thm:main}.

\subsection{Structure of $A$}\label{subsec:rect}
\begin{lemma}\label{lem:upperbound}
Suppose $6\leq m \leq \epsilon |S|$ with $\epsilon\leq 10^{-4}$.
Then for any positive integer $k\leq 32$ we have
$$
|kA|\le k (|A|+m)\left ( 1+\frac{5k\epsilon}{2}\right) + 1.
$$
\end{lemma}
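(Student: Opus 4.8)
The plan is to obtain the bound in two moves: first \emph{inflate} the sumset $S+kA$ using the Pl\"unnecke--Ruzsa estimates, and then \emph{deflate} back from $S+kA$ to $kA$ by means of the Cauchy--Davenport inequality. The deflation is the crucial step, because the estimate on $|kA|$ produced by Pl\"unnecke--Ruzsa alone inevitably carries an additive error term of size $|S|$ --- hopeless for the intended applications, where $|kA|$ should be linear in $k$ and of order $m\ll|S|$ --- and it is exactly Cauchy--Davenport that strips this term away.

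Write $M=|A|+m$ and $c=|S+A|/|S|$. By \eqref{eq:ubsa}, $|S+A|\le|S|+M$, so $1\le c\le 1+M/|S|$; and by \eqref{eq:|A|} together with $m\le\epsilon|S|$ one has $M\le 4m+5$, hence $x:=M/|S|\le 4\epsilon+5/|S|$, which is minuscule for $p\ge p_0$. Applying Theorem~\ref{thm:pr} to the pair $(S,A)$ produces a nonempty $S'\subseteq S$ with
\[
|S'+kA|\ \le\ c^{k}|S'|\ \le\ c^{k}|S|,
\qquad\text{whence}\qquad
|S'+kA|-|S'|\ \le\ (c^{k}-1)|S|.
\]
The routine part of the proof is then an elementary expansion of $(1+x)^{k}$ for $k\le 32$: beyond the linear term $kx$, the only contribution that matters is $\binom{k}{2}x^{2}$, which amounts to a relative factor $1+\tfrac{(k-1)x}{2}+\dots\le 1+\tfrac{5k\epsilon}{2}$ (here one uses $x\le 4\epsilon+5/|S|$, $k\le 32$, $\epsilon=10^{-4}$ and $p\ge p_0$). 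Multiplying through by $|S|$ and using $x|S|=M$ gives $(c^{k}-1)|S|\le kM\bigl(1+\tfrac{5k\epsilon}{2}\bigr)$, and in particular $|S'+kA|\le|S|+kM\bigl(1+\tfrac{5k\epsilon}{2}\bigr)<\tfrac32|S|<p$, using $|S|<p/2$; so $S'+kA$ is a proper subset of $\Z/p\Z$.

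Now the deflation: since $|S'+kA|<p$, the Cauchy--Davenport inequality applies in its nontrivial range and gives $|S'+kA|\ge|S'|+|kA|-1$, i.e.
\[
|kA|\ \le\ |S'+kA|-|S'|+1\ \le\ kM\Bigl(1+\frac{5k\epsilon}{2}\Bigr)+1\ =\ k(|A|+m)\Bigl(1+\frac{5k\epsilon}{2}\Bigr)+1,
\]
as asserted. I expect the only genuine difficulty to be the conceptual one already highlighted: recognising that one must route the argument through Cauchy--Davenport (after first verifying $S'+kA\ne\Z/p\Z$) instead of through the worthless bound $|kA|\le|S'+kA|$; once that is in place, the rest is bookkeeping on a binomial expansion.
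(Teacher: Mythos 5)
Your proof is correct and follows essentially the same route as the paper: apply Pl\"unnecke--Ruzsa to the pair $(S,A)$ to get $S'\subseteq S$ with $|S'+kA|\le c^k|S'|$, verify that $c^k|S|<p$ so $S'+kA\ne\Z/p\Z$, then deflate by Cauchy--Davenport to get $|kA|\le (c^k-1)|S|+1$, and finish with a second-order binomial expansion of $c^k$. The only (cosmetic) difference is that the paper uses $m\ge 6$ to write $|A|+m\le 5m\le 5\epsilon|S|$, giving the clean bound $x\le 5\epsilon$ directly, whereas you carry the slightly looser $x\le 4\epsilon+5/|S|$ and invoke $p\ge p_0$ to absorb the extra $5/|S|$; both lead to the same estimate.
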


\begin{proof}  Rewrite \eqref{eq:ubsa} as
$$
|S+A|\le |S|+|A|+m=c|S|,
$$
with $c=1+\frac{|A|+m}{|S|}$. By Theorem~\ref{thm:pr}
(Pl\"unecke--Ruzsa), for each $k$ there is a subset $S'=S'(k)$ such that
\be\label{eq:plu1}
|S'+kA|\le c^{k}|S'|.
\ee
Apply \eqref{eq:|A|} and $m\geq 6$ to get
$|A|\le 3m+5\le 4m$. Since  $m\le \epsilon |S|$ we obtain
for the constant $c$ just defined $c\leq 1+5\epsilon$.
We clearly have
$$c^{k}|S'|\le c^k|S|\le (1+5\epsilon)^k|S|<2|S|<p$$ for $k\le 32$.
Now apply the Cauchy-Davenport Theorem to $S'+kA$ in (\ref{eq:plu1})
to obtain $|S'|+|kA|-1\leq c^k|S'|$, from which
\begin{equation}
  \label{eq:kA}
  |kA|\le (c^k-1)|S'| + 1\le (c^k-1)|S| + 1.
\end{equation}
Numerical computations give that
   $$(1+x)^k\leq 1+kx+\frac{k^2}{2}x^2$$
for any positive real number $x\leq 5.10^{-4}$ and for $k\leq 32$.
Hence, since $c= 1+(|A|+m)/|S|\le 1+5\epsilon$,  we can
write, for $k\le 32$,
$$
c^{k} =\left(1+\frac{|A|+m}{|S|}\right)^k \le
1+k\frac{|A|+m}{|S|}+\frac{k^2}{2}\left(\frac{|A|+m}{|S|}\right)^2.
$$
Applied to \eqref{eq:kA} we get
\begin{eqnarray*}
 |kA|&\le&
 k(|A|+m)+\frac{k^2}{2}\left(\frac{(|A|+m)^2}{|S|}\right)+1 \\
 &\le &k(|A|+m)\left( 1+\frac{k}{2}\frac{(|A|+m)}{|S|}\right)+1 \\
 &\le&k(|A|+m)\left ( 1+\frac{5k\epsilon}{2}\right)+1,
 \end{eqnarray*}
as claimed.
\end{proof}

\begin{lemma}\label{lem:recta} If $6\leq m\leq \epsilon |S|$
with $\epsilon\leq 10^{-4}$, then $A$ and $2A$ are contained in an
arithmetic progression of length less than $p/2$.
\end{lemma}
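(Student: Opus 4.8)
The plan is to combine the sumset bound from Lemma \ref{lem:upperbound} with the Bilu--Lev--Ruzsa-type length bound, but since $|A|$ may now be too large for Theorem \ref{thm:rectlev} to apply directly, we must instead invoke a rectification principle that applies to sets with small doubling rather than to small sets. Concretely, I would first apply Lemma \ref{lem:upperbound} with $k=2$ to get $|2A|\le 2(|A|+m)(1+5\epsilon)+1$, and with $|A|\le 4m$ and $m\le\epsilon|S|$ this yields a bound of the shape $|2A|\le (2+\gamma)|A|$ for a small explicit $\gamma$ depending only on $\epsilon$. Thus $A$ itself is a set in $\Z/p\Z$ with doubling constant very close to $2$. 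The key extra input is then an iterated form of the same estimate: Lemma \ref{lem:upperbound} actually controls $|kA|$ for all $k\le 32$, and it gives $|kA|\le k(|A|+m)(1+5k\epsilon/2)+1$, so in particular $|kA|$ stays below, say, $ck|A|$ with $c$ a fixed constant arbitrarily close to $2$ for all these $k$.

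The heart of the argument is to use this control of many dilates $kA$ to force $A$ (and $2A$) into a short progression. I would argue as follows: suppose for contradiction that $\ell(A)\ge p/2$. Since $|A|$ is small relative to $p$ (indeed $|A|\le 3m+5\le 3\epsilon|S|+5$ and $|S|<p/2$, so $|A|=o(p)$), a set of cardinality $|A|$ spanning a progression of length $\ge p/2$ must be, in a quantitative sense, ``spread out'' in $\Z/p\Z$; iterating the sumset $k$ times should then blow up its length beyond $p$ and force $kA=\Z/p\Z$ for some $k\le 32$, contradicting $|kA|\le ck|A|<2|S|<p$ (the inequality already established inside the proof of Lemma \ref{lem:upperbound}). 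To make ``spread out $\Rightarrow$ $kA$ is everything'' precise, the cleanest route is to pass through the length directly: one shows that for a set $X$ with $\ell(X)>p/2$ one has $\ell(2X)$ significantly larger than $\ell(X)$ unless $\ell(2X)=p$ already, so that after boundedly many doublings the length reaches $p$; combined with the Cauchy--Davenport-type lower bound $|2X|\ge\min(2|X|-1,p)$ this gives the contradiction. This is essentially the content of Theorem \ref{thm:rectlev} upgraded from a cardinality hypothesis to a doubling hypothesis, and since $A$ has doubling $<2+\gamma$ it qualifies.

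Once we know $\ell(A)<p/2$, the sum $A+A$ lives inside a progression of length $<p$, hence can be viewed as a sum of integers; applying Theorem \ref{thm:ls2} to $X=Y=A$ (legitimate because $|2A|-2|A|$ is at most $\approx\gamma|A|$, which is $\le|A|-3$ and $\le|A|-4$ for $\epsilon$ small, so both size conditions hold) shows $A$ sits in an arithmetic progression of length $|A|+(|2A|-2|A|)+1<2|A|$; in particular this progression has length $<2|A|\le 2(3m+5)=6m+10$, which is far below $p/2$, so $\ell(A)<p/2$ is confirmed and moreover $\ell(2A)\le 2\ell(A)-1<p/2$ as well since $2A$ lies in the sumset of a progression of length $\ell(A)$ with itself. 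I expect the main obstacle to be the quantitative ``spreading'' step: proving that a small set with length $\ge p/2$ must have its dilates exhaust $\Z/p\Z$ within $32$ steps requires care, and the natural fix is simply to quote the appropriate statement from \cite{blr} (or re-derive it from Theorem \ref{thm:rectlev} applied not to $A$ but to a well-chosen small subset of $A$ of size $\le\log_4 p$, using that such a subset already has length $<p/2$ and then bootstrapping via the small doubling of $A$). Everything else is a routine substitution of the numerical bounds $|A|\le 3m+5$, $m\ge 6$, $m\le\epsilon|S|$, $|S|\ge p/35$, $|S|<p/2$ and $\epsilon=10^{-4}$.
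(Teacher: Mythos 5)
The key step in your argument --- that $A$ has doubling constant close to $2$ --- is not correct, and the subsequent rectification step inherits this error. From Lemma~\ref{lem:upperbound} with $k=2$ one only gets
\[
|2A|\le 2(|A|+m)(1+5\epsilon)+1 = 2|A|+2m+10\epsilon(|A|+m)+1,
\]
and the term $2m$ is \emph{not} small compared to $|A|$: from $|A|\le 3m+5$ you get $m\ge (|A|-5)/3$, while the only upper bound on $m$ is $m\le |A|-3$. So the ratio $|2A|/|A|$ can be anywhere up to roughly $4$. Writing $|A|\le 4m$ gives a \emph{lower} bound on $m$ in terms of $|A|$, not an upper one, so it does not produce a $(2+\gamma)$ doubling constant. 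This breaks both halves of your plan: you cannot apply a ``rectification for sets of doubling $<2+\gamma$'' to $A$, and at the end you cannot apply Theorem~\ref{thm:ls2} with $X=Y=A$ either, since $|2A|-2|A|\le |A|-3$ can fail when $|A|$ is near $m+3$.

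Even granting small doubling, the ``spreading'' step you sketch (length $\ge p/2$ forces $kA=\Z/p\Z$ after boundedly many doublings) is not justified; you acknowledge this and propose to quote an upgraded Bilu--Lev--Ruzsa principle, but no such statement is available in the paper and it would need a separate proof. The paper's actual route avoids both problems by working with higher iterates: it uses Lemma~\ref{lem:upperbound} for $k=32$ to get the upper bound $|32A|\le 65|A|$, deduces by a multiplicativity argument that some doubling ratio $|2^jA|/|2^{j-1}A|$ with $1\le j\le 5$ must drop below $c_1=2.44$ (otherwise $|32A|\ge c_1^5|A|-O(1)>65|A|$), notes that $|2^{j-1}A|\le |16A|<8\cdot 10^{-3}p$, and then applies Freiman's Theorem~\ref{thm:fv} to $2^{j-1}A$ with $c_0=8\cdot 10^{-3}$, $c_1=2.44$. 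That theorem (which the paper already has) plays exactly the role you wanted a BLR-upgrade to play, and it is applied to the right object: an iterate of $A$ whose doubling ratio is controlled, rather than $A$ itself.
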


\begin{proof} Put $k=2^j$ and $c_1=2.44$. Suppose that  $|2^jA|\ge
c_1|2^{j-1}A|-3$
 for each $1\le j\le 5$. Then,
 $$
 |32 A|\ge c_1^5|A|-3(c_1^5-1)/(c_1-1)\ge 86|A|-179\ge 65|A|+10,
 $$
 where in the last inequality we have used $|A|\ge m+3\ge 9$.
On the other hand, by  Lemma \ref{lem:upperbound}, we have
\be\label{eq:small2} |kA|\le k(|A|+m)\left(
1+\frac{5k\epsilon}{2}\right)+1\le 2k(1+\frac{5k\epsilon}{2})|A|,\ee
 which, for $k=32$, gives $|32A|\le 64(1+80\epsilon)|A|\le 65|A|$, a contradiction.

Hence $|2^jA|\le c_1|2^{j-1}A|-3$ for some $1\le j\le 5$. Since
$$|2^{j-1}A|\le |16A|\le 32(1+40\epsilon)|A|\le
64(1+40\epsilon)\epsilon p<8\cdot 10^{-3}p,$$ where again we have
used inequality (\ref{eq:small2}) for $k=16$ and $|A|\le 4m\le
4\epsilon |S|\le 2\epsilon p$. It follows from Freiman's Theorem
\ref{thm:fv} (with $c_0=8\cdot 10^{-3}$ and $c_1=2.44$) that
$A\subset 2^{j-1}A$ is contained in an arithmetic progression of
length at most
$$|2^jA|-|2^{j-1}A|+1< 1.44|2^{j-1}A|\le (1.44)8\cdot 10^{-3}p.$$
In particular, $A$ and $2A$ are included in arithmetic progressions
of lengths less than $p/2$.
\end{proof}

Now that we know that $A$ and $2A$ are contained in an arithmetic
progression of length smaller than $p/2$, we can apply to them the
Freiman's $(3k-4)$ Theorem to get the following result.

\begin{lemma}\label{lem:rectabis}
Suppose $6\leq m\leq \epsilon |S|$ with $\epsilon\leq 10^{-4}$,
and let
$0<\delta \leq 10^{-1}$. If $A$ is {\em not} contained in an
arithmetic progression of length  $(2-\delta)(|A|-1)$ then
$2A$ is contained in
 an arithmetic progression of length $(2-\delta)(|2A|-1)$.
\end{lemma}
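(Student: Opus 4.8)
The plan is to exploit Lemma~\ref{lem:recta}, which tells us that both $A$ and $2A$ lie in arithmetic progressions of length less than $p/2$. After rectifying (separately) each of them we may regard $A$, resp.\ $2A$, as a set of integers; and since $\ell(A)<p/2$ the integer sumset $A+A$ reduces bijectively modulo $p$ to the mod-$p$ sumset $2A$, while since $\ell(2A)<p/2$ the integer sumset $2A+2A$ reduces bijectively to $4A$. So throughout we may compute $|2A|$ and $|4A|$ either way, and in particular $\mu:=|2A|-2|A|$ equals the integer quantity $|A+A|-2|A|$.

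The first and main step is to extract from the hypothesis a \emph{lower} bound on $\mu$. Suppose first $|2A|\le 3|A|-4$. Then Freiman's $(3k-4)$ theorem, applied in $\Z$ to $A$, places $A$ in an arithmetic progression of length $|2A|-|A|+1=|A|+\mu+1$, and this bounds $\ell(A)$ from above; the hypothesis that $A$ is \emph{not} contained in a progression of length $(2-\delta)(|A|-1)$ then forces $|A|+\mu+1>(2-\delta)(|A|-1)$, i.e. $\mu>(1-\delta)|A|-3$. If instead $|2A|\ge 3|A|-3$, then directly $\mu\ge |A|-3\ge(1-\delta)|A|-3$. Either way,
\[
\mu\ \ge\ (1-\delta)|A|-3 .
\]
This dichotomy is the only use of the ``$A$ has no short progression'' assumption and is the conceptual core of the proof.

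The remaining steps are a computation. Lemma~\ref{lem:upperbound} with $k=4$ gives $|4A|\le 4(|A|+m)(1+10\epsilon)+1$. Using this together with $|2A|=2|A|+\mu$ and $\mu\ge(1-\delta)|A|-3$, and recalling $m+3\le|A|\le 3m+5\le 4m$ (the last inequality because $m\ge 6$), one checks --- for $\epsilon\le 10^{-4}$ and $0<\delta\le 10^{-1}$ --- that both
\[
|4A|\le 3|2A|-4
\qquad\text{and}\qquad
|4A|-|2A|+1\le (2-\delta)(|2A|-1).
\]
Once the lower bound on $\mu$ is in hand, both of these reduce to linear inequalities in $|A|$ and $m$ valid for every $m\ge 6$. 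The first inequality is exactly the hypothesis of Freiman's $(3k-4)$ theorem applied in $\Z$ to the set $2A$, which therefore lies in an arithmetic progression of length $|4A|-|2A|+1$; translating back to $\Z/p\Z$ (legitimate since $|4A|-|2A|+1\le 2|2A|-3<p$) and invoking the second inequality, $2A$ lies in an arithmetic progression of length at most $(2-\delta)(|2A|-1)$, as required.

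The main obstacle is making the final computation close \emph{uniformly in $\delta$}: when $\delta$ is very small the target length $(2-\delta)(|2A|-1)$ is only marginally shorter than $2(|2A|-1)$, so one must check that the gain from $\mu\ge(1-\delta)|A|-3$ over the trivial $\mu\ge-1$ is genuinely enough --- which it is, because $|A|\ge m+3\ge 9$ supplies a fixed constant-factor slack that does not degrade as $\delta\to 0$. A secondary point requiring care is the passage between sumsets in $\Z$ and in $\Z/p\Z$: one must record that $\ell(A)<p/2$ and $\ell(2A)<p/2$ prevent $A+A$ and $2A+2A$ from wrapping around, so that the bound on $|4A|$ produced by the Pl\"unecke--Ruzsa estimates over $\Z/p\Z$ is legitimately the cardinality of the integer sumset $2A+2A$ to which Freiman's theorem is applied.
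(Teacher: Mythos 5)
Your proof is correct and amounts to the same argument as the paper's: both rest on Lemma~\ref{lem:recta} (so Freiman's $(3k-4)$ theorem can be applied in $\Z$ to the rectified $A$ and $2A$), on Lemma~\ref{lem:upperbound} with $k=4$ for the upper bound on $|4A|$, and on the observation that the hypothesis on $A$ forces $|2A|$ to be almost $(3-\delta)|A|$. The paper arranges the same algebra as a proof by contradiction (assuming \emph{both} $|2A|\ge(3-\delta)(|A|-1)$ and $|4A|\ge(3-\delta)(|2A|-1)$, deriving a contradiction, and then discarding the first alternative via the hypothesis) whereas you extract the lower bound on $\mu$ directly and feed it into the Freiman condition for $2A$, but these are the same computations in a slightly different order.
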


\begin{proof}
Suppose first
that $|2A|\ge (3-\delta)(|A|-1)$ and $|4A|\ge (3-\delta)(|2A|-1)$.
Then
\begin{equation}
  \label{eq:4A}
  |4A|\ge (3-\delta)^2|A|-(3-\delta)^2-(3-\delta)\ge
(3-\delta)^2|A|-12.
\end{equation}
On the other hand, Lemma \ref{lem:upperbound} for $k=4$ and
$\epsilon=10^{-4}$ gives $|4A|\le 4(1+10\epsilon)(|A|+m)+1$. By
using \eqref{eq:4A} and   $m\leq |A|-3$   we get
$$(3-\delta)^2|A|-12 \leq 8(1+10\epsilon)|A| -12(1+10\epsilon) +1.$$
Since   $m\geq 6$, we have $|A|\geq m+3\geq 9$. Therefore we obtain
$$(3-\delta)^2|A|< \left(8(1+10\epsilon) + \frac 19\right)|A|,$$
a contradiction for $\delta \leq 0.1$.

Hence,
\begin{itemize}
\item[(a)] either $|2A|< (3-\delta)(|A|-1) < 3|A|-3$, but since
 $\ell (A)<p/2$   by Lemma~\ref{lem:recta}, 
Freiman's $(3k-4)$ Theorem
applies and $A$ is contained in an arithmetic progression of length
$|2A|-(|A|-1)\leq (2-\delta)(|A|-1)$.
\item[(b)] Or $|4A|< (3-\delta)(|2A|-1)< 3|2A|-3$, but using
Lemma~\ref{lem:recta} again,   
$(3k-4)$--Freiman's Theorem
implies that $2A$ is contained in an arithmetic progression of
length $(2-\delta)(|2A|-1)$.
\end{itemize}
\end{proof}

\subsection{Structure of $S$ when $\ell (A)$ is small.}\label{subsec:l(A)}

For a subset $B\subset \Z/p\Z$  define the {\em density} of $B$ by
$$\rho B = \frac{|B|-1}{\ell (B)}.$$ The next lemma gives a lower bound for the
cardinality of a sumset of two subsets $B, C\in \zp$ when $\ell
(B)+\ell (C) > p$ in terms of their densities.

\begin{lemma}\label{lem:dens}
Let $0\in C\subset \zp$ with  $C\subset [0,\ell (C))$ and
$\ell(C)<p/2$. Let $I_1,\ldots , I_i,\ldots , I_{2t}$ be the
sequence of intervals defined by $I_i = [(i-1)c,ic)$, where $c=\ell
(C)$ and $t<p/2c$. Let $B\subset \Z/p\Z$   such that for every $i=1,
\ldots , 2t$, we have $I_i\cap B\neq \emptyset$. Then,
  $$|B+C|\geq |B\cup [(B+C)\cap I]|
    \geq |B| +(t-\frac 12)\ell(C)\left(\rho C - \frac{|B\cap I|}{(2t-1)c}\right),$$
where $I= I_1\cup\ldots\cup I_{2t}$.
\end{lemma}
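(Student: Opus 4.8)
The first inequality is free: since $0\in C$ one has $B=B+\{0\}\subseteq B+C$, and of course $(B+C)\cap I\subseteq B+C$, so $B\cup[(B+C)\cap I]\subseteq B+C$. Thus the real content is the second inequality, and here the plan is to bound $|(B+C)\cap I|$ from below and then observe that, since $B\subseteq B+C$, we have $B\cap[(B+C)\cap I]=B\cap I$, whence
\[
|B\cup[(B+C)\cap I]| \;=\; |B|+|(B+C)\cap I|-|B\cap I|.
\]

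I would write $B_i=B\cap I_i$ for $1\le i\le 2t$; by hypothesis every $B_i$ is nonempty. Since $t<p/(2c)$ the interval $I=[0,2tc)$ wraps nowhere in $\zp$, so inside $I$ one may treat additions as additions of integers; in particular the elementary bound $|X+Y|\ge|X|+|Y|-1$ holds for nonempty $X,Y$ living inside $I$. The one geometric fact I use is that if $b\in B_i$ and $c'\in C\subseteq[0,c)$ then $b+c'\in I_i\cup I_{i+1}$; hence $B_i+C$ is contained in the length-$2c$ window $W_i:=I_i\cup I_{i+1}$, and $B_i+C\subseteq(B+C)\cap W_i$.

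Next I extract two lower bounds for $|(B+C)\cap I|$ from two families of pairwise disjoint windows, each contained in $I$. The first family is $W_1,W_3,\dots,W_{2t-1}$, which in fact partitions $I$; since the sets $B_{2j-1}+C\subseteq(B+C)\cap W_{2j-1}$ are disjoint, summing $|B_{2j-1}+C|\ge|B_{2j-1}|+|C|-1$ over $j=1,\dots,t$ gives
\[
|(B+C)\cap I|\;\ge\;\sum_{j\ \mathrm{odd}}|B_j|\;+\;t(|C|-1).
\]
The second family is $W_2,W_4,\dots,W_{2t-2}$ together with the single interval $I_{2t}$ (all pairwise disjoint, all inside $I$); summing $|B_{2j}+C|\ge|B_{2j}|+|C|-1$ over $j=1,\dots,t-1$ and adding the trivial $|(B+C)\cap I_{2t}|\ge|B_{2t}|$ (again because $0\in C$) gives
\[
|(B+C)\cap I|\;\ge\;\sum_{j\ \mathrm{even}}|B_j|\;+\;(t-1)(|C|-1).
\]
Averaging these two inequalities yields $|(B+C)\cap I|\ge\tfrac12|B\cap I|+\tfrac12(2t-1)(|C|-1)$, because $\sum_{j\ \mathrm{odd}}|B_j|+\sum_{j\ \mathrm{even}}|B_j|=|B\cap I|$. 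Plugging this into the identity above and using $\rho C=(|C|-1)/\ell(C)$ and $(t-\tfrac12)\ell(C)=(2t-1)c/2$, the asserted inequality drops out after a one-line rearrangement.

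The argument is really just bookkeeping, so I do not expect a serious obstacle; the only delicate point is the choice of the two window families. They must be arranged so that after averaging each $B_i$ is counted with total weight exactly $\tfrac12$ and the number of windows averages to $t-\tfrac12$ — this is precisely why the second family is ``$t-1$ windows plus the lone interval $I_{2t}$'' rather than a symmetric shift of the first, which would fail to cover $I_{2t}$. (Incidentally, the hypothesis $\ell(C)<p/2$ is automatic here, being a consequence of $2tc<p$ and $t\ge1$.)
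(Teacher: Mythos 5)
Your proof is correct and takes essentially the same approach as the paper: the same partition of $B\cap I$ into odd- and even-indexed pieces, the same use of Cauchy--Davenport on each block (the paper bounds $|B_0'+C|$ and $|(B_1'+C)\cap I|$ where $B_0',B_1'$ play the role of your odd and even cells, with the last even cell handled via the trivial bound exactly as you do), and the same averaging step; the only cosmetic difference is that the paper writes $|B+C|\geq|B\setminus B'|+\cdots$ instead of going through the identity $|B\cup[(B+C)\cap I]|=|B|+|(B+C)\cap I|-|B\cap I|$ as you do, but the arithmetic is identical.
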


\begin{proof} Let $B'=B\cap I$.
Let $B_0^i = B'\cap I_{2i-1}$ and $B_1^i = B'\cap I_{2i}$ and
define
 $B_0' = \bigcup_{i=1}^tB_0^i$, $B_1' = \bigcup_{i=1}^tB_1^i$
so that $B'=B_0'\cup B_1'$. Note that, since $C\subset [0,c)$,
 $$(B_0^i+C)\cap (B_0^j+C) = \emptyset $$
 for $i\neq j$ and that $B_0^i+C\subset I_{2i-1}\cup I_{2i}$. Therefore $B_0'+C$
 can be written as the following
 union of disjoint sets.
 $$B_0'+C = \bigcup_{i=1}^t (B_0^i+C)\subset I_1\cup\ldots\cup I_{2t}.$$
 Hence, since every set $B_0^i$ is nonempty, the Cauchy-Davenport
 Theorem implies
 \begin{equation}
   \label{eq:B_0'}
   |B_0'+C|\geq |B_0'| + t(|C|-1).
 \end{equation}
In a similar manner we have
\begin{eqnarray*}
  (B_1'+C)\cap I &=& \bigcup_{i=1}^{t-1} (B_1^i+C) \;\;\cup\;\;
  (B_1^{2t}+C)\cap I\\
                 &\supset& \bigcup_{i=1}^{t-1} (B_1^i+C) \;\;\cup B_1^{2t}
\end{eqnarray*}
so that,  applying the Cauchy-Davenport Theorem for $i=1\ldots t-1$,
we get
\begin{equation}
  \label{eq:B_1'}
  |(B_1'+C)\cap I|\geq |B_1'| + (t-1)(|C|-1).
\end{equation}
Now we have $|B+C|\ge |B\setminus B'|+|(B'_0+C)\cap I|$ and likewise
$|B+C|\ge |B\setminus B'|+|(B'_1+C)\cap I|$, hence, applying
\eqref{eq:B_0'} and \eqref{eq:B_1'},
 \begin{eqnarray*}
   |B+C|&\ge &|B\setminus B'|+\frac 12 \left(|(B'_0+C)\cap I|
                                        +|(B'_1+C)\cap I|\right)\\
        &\geq& |B| -|B'|/2+(t-\frac 12)(|C|-1)\\
        &\geq& |B| +(t-\frac 12)c\left(\rho C -\frac{|B'|}{(2t-1)c}\right)
 \end{eqnarray*}
 which proves the result.
 \end{proof}

Lemma \ref{lem:dens} allows us to conclude the proof when the
$(m+3)$--atom $A$ is contained in a short arithmetic progression.

 \begin{lemma}\label{lem:rects1}
Suppose $6\leq m \leq \epsilon |S|$ with $\epsilon\leq 10^{-4}$.
Suppose furthermore that $\ell (A)\le (2-\delta)(|A|-1)$.
 Then $\ell(S)\leq |S|+m+1$.
\end{lemma}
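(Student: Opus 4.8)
The plan is to use Lemma~\ref{lem:l(S)+l(A)}: assume for contradiction that $\ell_A(S)+\ell(A)>p$ and derive a contradiction with \eqref{eq:ubsa}. So fix the arithmetic progression of difference (say) $1$ and length $\ell(A)\le(2-\delta)(|A|-1)$ containing $A$; after translating we may assume $A\subset[0,\ell(A))$ with $0\in A$. Set $c=\ell(A)$. The hypothesis $\ell_A(S)+\ell(A)>p$ says precisely that $S$, viewed in $\Z/p\Z$, has no gap (in the difference-$1$ progression sense) of length $\ell(A)$; equivalently, partitioning $\Z/p\Z$ into consecutive intervals of length $c$, every such interval meets $S$. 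This is exactly the situation Lemma~\ref{lem:dens} is designed for, with $C=A$ and $B=S$.

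First I would check that the density of $A$ is bounded below: since $\ell(A)\le(2-\delta)(|A|-1)$ we get $\rho A=(|A|-1)/\ell(A)\ge 1/(2-\delta)$. Next, apply Lemma~\ref{lem:dens} with $t$ chosen as large as possible subject to $t<p/2c$, i.e. essentially $t\approx p/2c$, so that the union $I=I_1\cup\cdots\cup I_{2t}$ of intervals covers almost all of $\Z/p\Z$. The lemma gives
\[
|S+A|\ \ge\ |S|+\Bigl(t-\tfrac12\Bigr)\ell(A)\Bigl(\rho A-\frac{|S\cap I|}{(2t-1)c}\Bigr).
\]
Since $(2t-1)c$ is close to $p$ and $|S\cap I|\le|S|<p/2$, the quantity $|S\cap I|/((2t-1)c)$ is a little more than $|S|/p<1/2$, whereas $\rho A\ge 1/(2-\delta)$ is a little more than $1/2$. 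The key point is that $\rho A-|S\cap I|/((2t-1)c)$ is bounded below by a positive constant of order $\delta$ (plus the slack coming from $|S|/p<1/2$), and $(t-\tfrac12)\ell(A)$ is of order $p$; multiplying, $|S+A|\ge|S|+\Omega(\delta p)$. On the other hand \eqref{eq:ubsa} gives $|S+A|\le|S|+|A|+m\le|S|+4m\le|S|+4\epsilon|S|<|S|+4\epsilon p/2$ (using $|A|\le 4m$, $m\le\epsilon|S|$, $|S|<p/2$). Choosing $\epsilon$ small enough relative to $\delta$ — and recalling $\epsilon=10^{-4}$ while $\delta$ is a fixed constant coming from Lemma~\ref{lem:rectabis} — these two bounds are incompatible, yielding the contradiction. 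Hence $\ell_A(S)+\ell(A)\le p$, and then Lemma~\ref{lem:l(S)+l(A)} (or directly Theorem~\ref{thm:ls2} applied to $S$ and $A$) forces $S$ into an arithmetic progression of length $|S|+m+1$, i.e. $\ell(S)\le|S|+m+1$.

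The main obstacle is bookkeeping the constants carefully so that the "density gap" $\rho A-|S\cap I|/((2t-1)c)$ genuinely stays positive and bounded away from $0$. Two subtleties need attention: first, $t$ is an integer with $t<p/2c$, so $(2t-1)c$ is not exactly $p$ — one loses up to about $3c$, which is negligible since $c=\ell(A)\le 2|A|\le 8m\le 8\epsilon p$ is tiny compared to $p$, but this must be said. Second, one must handle the case where $\ell(A)$ is so small that $t<p/2c$ still leaves $t$ comfortably large (this always holds here since $c$ is at most a small multiple of $\epsilon p$). A minor point: Lemma~\ref{lem:dens} requires $0\in C\subset[0,\ell(C))$ and $\ell(C)<p/2$, both of which hold for $C=A$ after the normalization above, using $\ell(A)\le 2(|A|-1)\le 8\epsilon|S|<p/2$. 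Assembling these estimates, the positive term $(t-\tfrac12)\ell(A)(\rho A-\cdots)$ dominates $|A|+m$ by a wide margin, completing the argument.
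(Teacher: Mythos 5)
Your proposal follows essentially the same route as the paper's proof: assume $\ell_A(S)+\ell(A)>p$, apply Lemma~\ref{lem:dens} with $B=S$, $C=A$ to get $|S+A|\ge|S|+\Omega(\delta p)$ once the density gap $\rho A - |S\cap I|/((2t-1)\ell(A))$ is bounded below using $\rho A\ge 1/(2-\delta)$ and the fact that $(2t-1)\ell(A)$ is within $O(\epsilon p)$ of $p$, and contradict \eqref{eq:ubsa} since $|A|+m=O(\epsilon p)$. One cosmetic slip: $|A|+m\le 4m$ should read $|A|+m\le 5m$ (since $|A|\le 4m$), but this changes nothing in the order-of-magnitude argument.
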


 \begin{proof} Set $a=\ell (A)$. Write $p=2ta+r$, $0<r<2a$ and
let $I_1,\ldots ,I_i,\ldots , I_{2t}$ be the partition of $[0,2ta)$
into the intervals  $I_i = [(i-1)a,ia)$ and $I=\cup_{i=1}^{2t}I_i$.
Let $S'=S\cap I$.

Suppose that $\ell_A (S)+\ell (A)> p$. Then we
have $I_i\cap S'\neq \emptyset$ for each $i=1,\ldots 2t$. By Lemma
\ref{lem:dens} with $B=S$ and $C=A$,
\be\label{eq:s'a}
|S+A|\ge |S|
+(t-\frac 12)a\left(\rho A - \frac{|S'|}{(2t-1)a}\right).
\ee

Now we have $(2t-1)a > p-3a$ by definition of $t$. Since $|A|\le
3m+5$ we have $a=\ell(A) \leq 2(|A|-1)\leq 6m+8$, and since we have
supposed $m\geq 6$, we get $a\leq 8m$. We therefore have
\begin{equation}
  \label{eq:p-3a}
  (2t-1)a> p-3a\geq p-24m> (1-12\epsilon)p.
\end{equation}

By the hypothesis of the Lemma we have $\rho A\geq 1/(2-\delta)$.
Together with \eqref{eq:p-3a} we get, writing $|S'|\leq |S|< p/2$,
$$\rho A - \frac{|S'|}{(2t-1)a} >
    \frac{1}{2-\delta}- \frac{1}{2-24\epsilon}.$$
Finally, applying again \eqref{eq:p-3a}, inequality
\eqref{eq:s'a}
becomes
\begin{equation}
  \label{eq:s'a2}
  |S+A|> |S| + \frac{p}{2}(1-12\epsilon)\left(
  \frac{1}{2-\delta}- \frac{1}{2-24\epsilon}\right).
\end{equation}

Now recall that by definition of
$A$ we have $|A|\geq m+3$. We will therefore get that \eqref{eq:s'a2}
contradicts \eqref{eq:ubsa} whenever the righthand side of
\eqref{eq:s'a2} is greater than $|S|+2|A|$.  Since
$|A|\leq 3m+5\leq 4m\leq 2\epsilon p$, a contradiction is obtained
whenever
\begin{equation}
  \label{eq:4epsilon}
  \frac{1}{2}(1-12\epsilon)\left(
  \frac{1}{2-\delta}- \frac{1}{2-24\epsilon}\right)
  \geq 4\epsilon .
\end{equation}

For $\epsilon\le 10^{-4}$ the inequality \eqref{eq:4epsilon} is
verified for every $\delta>5\cdot 10^{-3}$. Since Lemma
\ref{lem:rectabis} allows us to  choose $\delta$ up to the value
$10^{-1}$,  the hypothesis $\ell_A (S)+\ell (A)> p$ can not hold, so
that the result follows from Lemma~\ref{lem:l(S)+l(A)}.
\end{proof}

\subsection{Structure of $S$ when $\ell (2A)$ is small.}\label{subsec:l(2A)}

To conclude the proof of Theorem \ref{thm:main} it remains to
consider the case where $\ell (A)>(2-\delta)(|A|-1)$.
We break up the proof into several lemmas.

\begin{lemma}\label{lem:ell(A)}
  Suppose $6\leq m \leq \epsilon |S|$ with $\epsilon\leq 10^{-4}$.
  Suppose furthermore that $\ell (A)>(2-\delta)(|A|-1)$. Then
  \begin{itemize}
  \item[(i)] $|2A|\ge (3-\delta)(|A|-1)$.
  \item[(ii)] $\ell (A)\le (1-\delta/2)|2A|.$
  \end{itemize}
\end{lemma}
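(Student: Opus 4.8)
The plan is to derive both parts from the hypothesis $\ell(A)>(2-\delta)(|A|-1)$ together with the structural alternative of Lemma~\ref{lem:rectabis} and the size bound of Lemma~\ref{lem:upperbound}. For part~(i), I would argue by contraposition using Freiman's $(3k-4)$ theorem. We already know from Lemma~\ref{lem:recta} that $\ell(A)<p/2$, so any $(3k-4)$-type argument in $\Z/p\Z$ may be transferred to $\Z$. If $|2A|<(3-\delta)(|A|-1)$, then in particular $|2A|<3|A|-4$ once $|A|$ is large enough (which holds since $|A|\ge m+3\ge 9$), and Freiman's theorem then forces $A$ into an arithmetic progression of length $|2A|-(|A|-1)<(2-\delta)(|A|-1)$, contradicting the hypothesis. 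Hence $|2A|\ge(3-\delta)(|A|-1)$, which is~(i). One should double-check the edge case where $(3-\delta)(|A|-1)$ sits between $3|A|-4$ and $3|A|-3$; since $\delta>0$ and $|A|\ge 9$ we have $(3-\delta)(|A|-1)\le 3|A|-3-\delta\cdot 8<3|A|-4$ for $\delta\ge 1/8$, but for the smaller $\delta$ we are allowed we instead note $(3-\delta)(|A|-1)\le 3(|A|-1)<3|A|-3\le 3|A|-4+1$, so $|2A|\le 3|A|-4$ still follows because $|2A|$ is an integer strictly less than $3|A|-3$. That is the one slightly delicate point, and I expect it to be the main (minor) obstacle.

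For part~(ii), the idea is to combine the lower bound on $|2A|$ from~(i) with the upper bound $\ell(A)\le 2(|A|-1)$ that is always available (indeed $\ell(A)\le 2(|A|-1)$ holds for every set with two consecutive elements after normalizing the difference; more robustly, $\ell(A)\le 2|A|-2$ follows here from Lemma~\ref{lem:recta} combined with Freiman, or one simply uses the crude bound $\ell(A)\le |A|+m+1\le 2|A|-2$ coming from $m\le|A|-3$). Then I would write $\ell(A)\le 2(|A|-1)$ and, from~(i), $|A|-1\le |2A|/(3-\delta)$, so $\ell(A)\le \frac{2}{3-\delta}|2A|$. It remains to check the elementary inequality $\frac{2}{3-\delta}\le 1-\delta/2$ for $0<\delta\le 10^{-1}$: clearing denominators this is $4\le(2-\delta)(3-\delta)=6-5\delta+\delta^2$, i.e. $\delta^2-5\delta+2\ge 0$, which holds for $\delta\le (5-\sqrt{17})/2\approx 0.438$, hence certainly for $\delta\le 0.1$. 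This gives $\ell(A)\le(1-\delta/2)|2A|$, which is~(ii).

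In summary, the proof is short: part~(i) is contraposition via Freiman's $(3k-4)$ theorem applied inside the small progression supplied by Lemma~\ref{lem:recta}, and part~(ii) is the arithmetic combination of~(i) with the trivial bound $\ell(A)\le 2(|A|-1)$, reduced to the scalar inequality $\frac{2}{3-\delta}\le 1-\delta/2$. The only thing requiring a moment's care is the integrality argument handling the gap between $(3-\delta)(|A|-1)$ and $3|A|-4$ in part~(i); everything else is routine. No large computation is involved, and both statements should fall out in a few lines each.
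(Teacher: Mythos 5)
Part~(i) of your argument is correct and is exactly what the paper does: the paper proves (i) by citing case~(a) of Lemma~\ref{lem:rectabis}, and case~(a) is precisely the contraposition via Freiman's $(3k-4)$ theorem that you spell out, including the integrality step taking $|2A|<(3-\delta)(|A|-1)<3|A|-3$ down to $|2A|\le 3|A|-4$.

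Part~(ii), however, has a genuine gap. Your argument hinges on the bound $\ell(A)\le 2(|A|-1)$, and neither justification you offer for it is valid in the regime under consideration. The Freiman route fails because part~(i) has just shown $|2A|\ge(3-\delta)(|A|-1)$, and for small $\delta$ and any $|A|$ this is $\ge 3|A|-3>3|A|-4$, so Freiman's $(3k-4)$ theorem simply does not apply to $A$ here — the whole point of the hypothesis $\ell(A)>(2-\delta)(|A|-1)$ is that $A$ itself may \emph{fail} to lie in a short progression, which is why the paper falls back on $2A$. The ``crude bound'' $\ell(A)\le|A|+m+1$ is likewise never established and has no visible source: $|2A|$ being large (close to or above $3|A|-3$) is precisely the situation in which one cannot control $\ell(A)$ linearly in $|A|$. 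In fact the branch of Lemma~\ref{lem:rectabis} you are in allows $\ell(A)$ to be as large as roughly $4|A|$ (since $\ell(A)\le(\ell(2A)+1)/2\le(1-\delta/2)|2A|$ and $|2A|$ can approach $4(|A|+m)$ by Lemma~\ref{lem:upperbound}), so $\ell(A)\le 2(|A|-1)$ is not just unproven but false in general under these hypotheses. The paper's actual route is different and does not pass through any bound on $\ell(A)$ in terms of $|A|$: since $\ell(A)<p/2$ by Lemma~\ref{lem:recta}, one has $\ell(2A)\ge 2\ell(A)-1$, i.e.\ $\ell(A)\le(\ell(2A)+1)/2$; then the other branch of Lemma~\ref{lem:rectabis} gives $\ell(2A)\le(2-\delta)(|2A|-1)$, and $(\ell(2A)+1)/2\le(1-\delta/2)(|2A|-1)+1/2\le(1-\delta/2)|2A|$. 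That is the argument you should use for (ii); your scalar inequality $\frac{2}{3-\delta}\le 1-\delta/2$ is not needed and the detour through $2(|A|-1)$ should be removed.
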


\begin{proof}
  By point (a) of the final argument in the proof of Lemma~\ref{lem:rectabis} we know that
  we can not have $|2A|< (3-\delta)(|A|-1)$. This proves (i).

  Since $A$ is contained in an arithmetic progression of length
  less than $p/2$ (Lemma~\ref{lem:recta}) we have
  $\ell(A)\leq (\ell(2A)+1)/2$. Now
  Lemma~\ref{lem:rectabis} implies
  $\ell (2A)\leq (2-\delta)(|2A|-1)$,
  hence $(\ell(2A)+1)/2\leq (1-\delta/2)|2A|$. This proves (ii).
\end{proof}

Next we apply the Pl\"unecke-Ruzsa inequalities to exhibit a subset
$T$ of $S$ that sums to a small sumset with $2A$. We then show
that this set $T$ must be contained in an arithmetic progression
with few missing elements.

\begin{lemma}\label{lem:T}
  Suppose $6\leq m \leq \epsilon |S|$ with $\epsilon\leq 10^{-4}$.
  Suppose furthermore that $\ell (A)>(2-\delta)(|A|-1)$. Then there exists
  $T\subset S$ such that, denoting $\lambda=|T|/|S|$,
  \begin{align}
    |2A| & \leq \lambda(4+10\epsilon)(|A|-1), \label{eq:2a}\\
    \ell (T)&\le |T|+2\ell(A). \label{eq:ls}
  \end{align}
\end{lemma}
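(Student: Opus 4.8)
The plan is to feed the relation $|S+2A|\le c'|S|$ into the Plünecke--Ruzsa machinery and then to run the same maximal--progression bookkeeping that was used in Lemma~\ref{lem:rectsasmall}, but now with $2A$ playing the role of the small set. First I would produce the subset $T$. We already have $|S+A|\le |S|+|A|+m$, and combining this with $|2A|\le (4+10\epsilon)(|A|-1)$-type bounds from Lemma~\ref{lem:upperbound} gives a bound of the shape $|S+2A|\le c'|S|$ with $c'$ close to $2$. Actually the cleaner route is to apply Theorem~\ref{thm:pr} (Plünecke--Ruzsa) directly to the pair $(S,A)$ with $j=2$: there is a nonempty $T\subset S$ with $|T+2A|\le c^2|T|$, where $c=1+(|A|+m)/|S|\le 1+5\epsilon$ as in Lemma~\ref{lem:upperbound}. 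Then apply Cauchy--Davenport to $T+2A$ (valid since $c^2|T|<2|S|<p$): $|T|+|2A|-1\le |T+2A|\le c^2|T|$, so $|2A|\le (c^2-1)|T|+1\le (c^2-1)|S|+1$. Writing $\lambda=|T|/|S|$ and expanding $c^2-1=2(|A|+m)/|S|+O(\epsilon^2)$, and using $|A|\le 3m+5\le 4|A|$ together with $m\le |A|-3$, one gets $|2A|\le \lambda(4+10\epsilon)(|A|-1)$, which is \eqref{eq:2a}. The one subtlety is making sure the $\lambda=|T|/|S|$ factor is genuinely there: the Cauchy--Davenport step gives $|2A|-1\le (c^2-1)|T|$, and $|T|=\lambda|S|$, so $(c^2-1)|T|=\lambda\cdot(c^2-1)|S|\le\lambda(4+10\epsilon)(|A|+m)$; then $|A|+m\le|A|+(|A|-3)<2(|A|-1)$ absorbs the factor into $(|A|-1)$ at the cost of doubling the constant — one must check the arithmetic closes with $4+10\epsilon$ rather than something larger, using $m\ge 6$ so that $|A|\ge 9$ makes the $+1$ negligible.

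For \eqref{eq:ls} I would decompose $T$ into its maximal arithmetic progressions with the common difference of $2A$ (here I use that $\ell(2A)<p/2$, Lemma~\ref{lem:recta}, so ``the'' difference of $2A$ makes sense and $2A$ can be thought of as a set of integers after translation). If $T$ splits into $k$ such maximal runs along that difference, then $|T+2A|\ge |T|+k(|2A|-1)\ge |T|+k$ in the worst case, but more usefully $|T+2A|\ge |T|+k\cdot$(something), and combined with $|T+2A|\le c^2|T|<2|S|$ this bounds $k$. Then, invoking Lemma~\ref{lem:l(S)+l(A)} (we may assume $S$, hence $T$, has no gap of length $\ell(A)$), each of the $k-1$ gaps inside $T$ has length at most $\ell(A)$, so $\ell_{2A}(T)\le |T|+(k-1)\ell(A)$. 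The point is to show $k-1\le 2$, or more precisely $(k-1)\ell(A)\le 2\ell(A)$; this should follow because $|T+2A|\le c^2|T|$ forces $k(|2A|-1)\le (c^2-1)|T|+$ (boundary terms), and since $|2A|\ge(3-\delta)(|A|-1)$ is comparable to $|A|$ while $(c^2-1)|T|\le(c^2-1)|S|$ is at most about $2(|A|+m)$, we get $k$ bounded by a small absolute constant, indeed $k\le 3$. One then has to relate $\ell(A)$ and $\ell(2A)$: by Lemma~\ref{lem:ell(A)}(ii), $\ell(A)\le(1-\delta/2)|2A|$, which lets one compare $k\,\ell(2A)$-scale quantities with $\ell(A)$-scale quantities cleanly.

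The main obstacle I anticipate is the second part, \eqref{eq:ls}: one has to be careful that the ``maximal progressions of difference $r_{2A}$'' decomposition of $T$ interacts correctly with the sumset lower bound. The naive bound $|T+2A|\ge|T|+k(|2A|-1)$ is far too strong — it would force $k$ to be essentially $1$ — and is in fact wrong because consecutive runs of $T$ can have their $2A$-translates overlap; the correct statement is that distinct maximal runs contribute at least $\ell(2A)$ new elements each beyond the previous run only when separated by a gap of length $\ge\ell(2A)$, whereas shorter gaps contribute proportionally less. So the honest argument is: the number of gaps of $T$ of length $>\ell(A)$ is zero (by the no-gap assumption), and the total ``extra'' mass $|T+2A|-|T|$ is at least the number of such long gaps times $\ell(2A)$ minus corrections — which forces the decomposition to be coarse, giving $\ell_{2A}(T)-|T|\le 2\ell(A)$ directly. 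I would phrase this as: if $\ell_{2A}(T)>|T|+2\ell(A)$ then $T$ omits, within its enclosing progression, a set of total size $>2\ell(A)$, and since no individual gap exceeds $\ell(A)$ there are at least three gaps, each of which (being a run of $\ge 1$ but $\le\ell(A)$ missing elements, flanked by elements of $T$) contributes at least $\min(\text{gaplength},\ell(2A))$ fresh elements to $T+2A$; summing and using $\ell(2A)\ge$ gaplength (since each gap $\le\ell(A)\le\ell(2A)$ by Lemma~\ref{lem:recta}) shows $|T+2A|\ge|T|+2\ell(A)$, and then $|T+2A|\le c^2|T|<2|S|$ together with $2\ell(A)$ being comparable to $|2A|\gtrsim|A|$ — wait, this needs $\ell(A)$ large, which is exactly the standing hypothesis $\ell(A)>(2-\delta)(|A|-1)$ — yields the contradiction. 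The delicate bookkeeping of which gaps contribute how much to the sumset is where the real work lies; everything else is Plünecke--Ruzsa plus Cauchy--Davenport plus the numerical inequalities already set up in Lemma~\ref{lem:upperbound}.
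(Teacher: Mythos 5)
Your derivation of \eqref{eq:2a} follows the paper's route closely (Pl\"unecke--Ruzsa with $j=2$, then Cauchy--Davenport, then arithmetic), though you leave the handling of the Cauchy--Davenport ``$+1$'' vague. That constant does not simply vanish because $|A|\ge 9$; the paper disposes of it by noting that $\lambda(2+5\epsilon)<1$ would force $|2A|<2|A|-1$, contradicting Cauchy--Davenport, so $\lambda(2+5\epsilon)\ge 1$ and the $-\lambda(2+5\epsilon)+1$ term is nonpositive. This is a small but real step you need to supply.

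The second part, \eqref{eq:ls}, is where your argument genuinely breaks down, and it diverges entirely from the paper's. Your gap-counting hinges on the claim ``we may assume $S$, hence $T$, has no gap of length $\ell(A)$.'' That inference is false: $T$ is a \emph{subset} of $S$, and $\Z/p\Z\setminus T$ contains $\Z/p\Z\setminus S$, so even when $S$ has no long gap, $T$ certainly can --- indeed the points of $S\setminus T$ (denoted $\ot$ in Lemma~\ref{lem:last}) are exactly where such extra gaps live, and the whole structure of the subsequent Lemma~\ref{lem:last} depends on this. Without that no-gap assumption your bound ``each short gap contributes at least its length in fresh elements of $T+2A$'' has nothing to stand on, and it is anyway not obviously true since $2A$ need not fill an interval. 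The paper's actual argument is different and cleaner: it applies the density Lemma~\ref{lem:dens} to the pair $(T,2A)$ (using $\ell(2A)\le(2-\delta)(|2A|-1)$ from Lemma~\ref{lem:rectabis}) and shows that the hypothesis $\ell_{2A}(T)+\ell(2A)>p$ forces $|T+2A|$ to exceed the Pl\"unecke--Ruzsa upper bound \eqref{eq:ubs2abis}; hence $T$ \emph{does} have a gap of length $\ell(2A)$, $T+2A$ is rectified to a sum of integers, and the Lev--Smelianski Theorem~\ref{thm:ls2} (after verifying $\mu\le\min\{|T|,|2A|\}-3$ via Lemma~\ref{lem:ell(A)}(i) and $|T|\ge\mu+4$) gives $\ell(T)\le|T|+\mu+1\le|T|+|2A|\le|T|+2\ell(A)$. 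You never invoke Lev--Smelianski, which is the tool that actually produces \eqref{eq:ls}, and your contradiction route begins from an unjustified structural assumption on $T$.
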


\begin{proof}
  By Theorem \ref{thm:pr} and (\ref{eq:ubsa}),  there is $T\subset S$
such that
$$
|T+2A|\le (1+\frac{|A|+m}{|S|})^2|T|\le |T|+
2(|A|+m)\frac{|T|}{|S|}+\frac{(|A|+m)^2}{|S|}\frac{|T|}{|S|}.
$$
Writing $|A|+m\leq 3m+5+m\leq 5m\leq 5\epsilon |S|$ and
$\lambda=|T|/|S|$ we get
\begin{equation}   \label{eq:ubs2abis}
|T+2A|\leq |T|+\lambda(|A|+m)(2+5\epsilon)<p.
\end{equation}

Now apply  the Cauchy-Davenport Theorem $|T+2A|\geq |T|+|2A|-1$ in
\eqref{eq:ubs2abis}  to get, since $|A|\geq m+3$,
\begin{eqnarray}
  |2A| -1 &\leq& \lambda(2|A|-3)(2+5\epsilon),\mbox{ and } \nonumber\\
    |2A|  &\leq& 2\lambda(2+5\epsilon)(|A|-1) - \lambda(2+5\epsilon)
    +1. \label{eq:+1}
\end{eqnarray}
Notice that if $\lambda(2+5\epsilon)<1$ then \eqref{eq:+1} gives
$|2A|< 2(|A|-1)+1$ which contradicts the Cauchy-Davenport Theorem.
Therefore we have $1-\lambda(2+5\epsilon)\leq 0$ and \eqref{eq:+1}
yields \eqref{eq:2a}.

In the remaining part we prove \eqref{eq:ls}.  Recall that the
hypothesis of the present lemma together with
Lemma~\ref{lem:rectabis} imply
\begin{equation}
  \label{eq:l(2A)}
  \ell(2A)\leq (2-\delta)(|2A|-1).
\end{equation}
Suppose first that
\be\label{eq:nohole} \ell_{2A} (T)+\ell (2A) > p. \ee
 Set
$a_2=\ell (2A)$ and $p=2ta_2+r$ with $0<r<2a_2$. Let
$I=I_1\cup\cdots\cup I_{2t}$ with $I_i=[(i-1)a_2,ia_2)$. By
(\ref{eq:nohole}) we have $T\cap I_i\neq\emptyset$ for each
$i=1,\ldots ,2t$. By Lemma \ref{lem:dens} with $B=T$ and $C=2A$,
\begin{equation}
  \label{eq:s'+2a}
|T+2A|\ge |T|+(t-\frac 12)a_2\left(\rho(2A)-\frac{|T'|}{(2t-1)a_2}\right)
\end{equation}
where $T'=T\cap I$. By   \eqref{eq:l(2A)} we have $a_2\le 2|2A|$, so
that by using   \eqref{eq:2a} and $\lambda \leq 1$ we obtain the
following rough upper bound
$$a_2\leq (8+20\epsilon)|A|\leq 9(3m+5)\leq 36m$$
where we have used $\epsilon \leq 1/20$.

As in the proof of Lemma \ref{lem:rects1}, we have, by definition of
$t$,
\begin{equation}
  \label{eq:p-3a_2}
  (2t-1)a_2\geq p -3a_2 \geq p-108m \geq p(1-54\epsilon)
\end{equation}
so that, writing $|T'|\leq |T|\leq |S|\leq p/2$, and applying
\eqref{eq:l(2A)} we have
\begin{eqnarray*}
\rho(2A)-\frac{|T'|}{(2t-1)a_2}
  \geq
  \frac{1}{2-\delta}-\frac{1}{2-108\epsilon}.
\end{eqnarray*}
Applying again \eqref{eq:p-3a_2}, inequality \eqref{eq:s'+2a} becomes
\begin{equation}
  \label{eq:s'+2abis}
  |T+2A|\ge |T|+\frac p2(1-54\epsilon)
  \left(\frac{1}{2-\delta}-\frac{1}{2-108\epsilon}\right).
\end{equation}
On the other hand,  \eqref{eq:ubs2abis} implies
$$|T+2A|\leq |T| + 10m +25\epsilon m\leq |T|+p(5\epsilon +25\epsilon^2/2)$$
which together with \eqref{eq:s'+2abis} gives
  \begin{equation}
  \label{eq:contradict}
5\epsilon +25\epsilon^2/2\geq \frac 12(1-54\epsilon)
     \left(\frac{1}{2-\delta}-\frac{1}{2-108\epsilon}\right).
\end{equation}
  For $\epsilon =10^{-4}$  the inequality \eqref{eq:contradict} fails to hold for each $\delta\ge 2\cdot 10^{-2}$. Since
  \eqref{eq:l(2A)} holds for every $\delta \le 10^{-1}$,   the hypothesis \eqref{eq:nohole}
  can not hold, so that the sumset $T+2A$ behaves like a sum of integers.
  Let us write
  $$|T+2A| = |T| + |2A| + \mu$$
  and check that the conditions of Theorem~\ref{thm:ls2} hold.
  By Lemma~\ref{lem:ell(A)} (i) we have
  \begin{align*}
    |2A|&\ge (3-\delta)(|A|-1)\\
        &\geq (2+5\epsilon)|A| +(1-\delta -5\epsilon)|A| -3\\
        &\geq (2+5\epsilon)|A| + \frac 32
   \end{align*}
since $m\geq 6$ and $|A|\geq m+3\geq 9$. Therefore
  \begin{align*}
   2|2A|&\geq 2(2+5\epsilon)|A| + 3\\
        &\geq (2+5\epsilon)(|A|+m) + 3,
  \end{align*}
which, since $\mu \leq (|A|+m)(2+5\epsilon) -|2A|$ by
\eqref{eq:ubs2abis}, leads to
\begin{equation}
  \label{eq:mu+3}
  |2A| \geq \mu +3.
\end{equation}
Now by definition of $\lambda$ we have $|T|=\lambda |S|$ and we also have
$|S|\geq 11\epsilon |S|$, so that
\begin{align*}
  |T| & \geq \lambda 11\epsilon |S| \geq \lambda 11m\\
      & \geq \lambda (2+5\epsilon)5m \geq  \lambda (2+5\epsilon)(|A|+m)
\end{align*}
and, since   $\mu \leq \lambda(|A|+m)(2+5\epsilon) -|2A|$ by
\eqref{eq:ubs2abis}, we obtain
\begin{equation}
  \label{eq:mu+4}
  |T| \geq \mu + |2A| \geq \mu +4.
\end{equation}
Inequalities \eqref{eq:mu+3} and \eqref{eq:mu+4} mean that
Theorem~\ref{thm:ls2} holds and we have~:
  $$\ell (T)\le |T|+\mu+1\le |T|+|2A|\le |T|+\ell(2A)\le |T|+2\ell(A).$$
This proves \eqref{eq:ls} and concludes the lemma.
\end{proof}

\begin{lemma}\label{lem:last}
  Suppose $6\leq m \leq \epsilon |S|$ with $\epsilon\leq 10^{-4}$.
  Suppose furthermore
  that $\ell (A)>(2-\delta)(|A|-1)$. Then $\ell(S)\leq |S|+m+1$.
\end{lemma}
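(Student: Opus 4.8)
The plan is to combine Lemma~\ref{lem:T} with the density argument of Lemma~\ref{lem:dens} applied to $S\setminus T$ and $A$, exactly mirroring the strategy of Lemma~\ref{lem:rects1} but now in two stages. First I would use Lemma~\ref{lem:T}: since $\ell(A)>(2-\delta)(|A|-1)$, there is a subset $T\subset S$ with $\ell(T)\le |T|+2\ell(A)$ (so $T$ is contained in an arithmetic progression of difference equal to that of $A$, missing at most $2\ell(A)$ elements) and with $|2A|\le \lambda(4+10\epsilon)(|A|-1)$ where $\lambda=|T|/|S|$. The inequality \eqref{eq:2a} forces $\lambda$ to be bounded away from $0$: indeed, combined with part (i) of Lemma~\ref{lem:ell(A)}, namely $|2A|\ge(3-\delta)(|A|-1)$, we get $(3-\delta)\le\lambda(4+10\epsilon)$, so $\lambda\ge (3-\delta)/(4+10\epsilon)>2/3$ for the relevant ranges of $\delta,\epsilon$. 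Thus $T$ already accounts for more than two thirds of $S$, and $S\setminus T$ is comparatively small: $|S\setminus T|=(1-\lambda)|S|<|S|/3$.

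Next I would invoke Lemma~\ref{lem:l(S)+l(A)} and argue by contradiction, assuming $\ell_A(S)+\ell(A)>p$. As in the earlier lemmas, translate so that $A\subset[0,\ell(A))$ with $0\in A$, set $a=\ell(A)$, write $p=2ta+r$ with $0<r<2a$, and partition $[0,2ta)$ into intervals $I_i=[(i-1)a,ia)$. The no-hole hypothesis gives $S\cap I_i\ne\emptyset$ for every $i$. Now apply Lemma~\ref{lem:dens} with $B=S$, $C=A$ to get
$$|S+A|\ge |S|+(t-\tfrac12)a\left(\rho A-\frac{|S\cap I|}{(2t-1)a}\right).$$
The point is that $\rho A$ is \emph{not} large here (we are in the case $\ell(A)>(2-\delta)(|A|-1)$, so $\rho A<1/(2-\delta)$, which is too weak), so unlike Lemma~\ref{lem:rects1} we cannot bound $|S\cap I|$ crudely by $|S|$. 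Instead we must exploit the structure of $T$: since $T$ sits inside an arithmetic progression of difference one (the same difference as $A$) of length $\le|T|+2a$, the set $T$ — hence the bulk of $S$ — occupies only about $|T|+2a$ of the residues, i.e. $|S\cap I|$ is essentially $|S|$ minus nothing useful unless we also use that $T$ is confined to a short progression. The right way to phrase it: run the density lemma with $B=S\setminus T$ (the small leftover) together with the fact that $T$ lies in a short interval, so that the intervals $I_i$ meeting $T$ contribute density $\rho A$-worth of sumset growth from $T$ directly via Lemma~\ref{lem:dens} applied to $T$ and $A$ — wait, $\rho A$ is small — so actually the cleanest route is: $T$ contained in an AP of difference one of length $\ell(T)\le |T|+2a$ means $\ell_A(S)\le \ell(T)+(\text{contribution of }S\setminus T)$, and one bounds the latter by noting $S\setminus T$ has fewer than $(1-\lambda)|S|$ elements, each creating a ``gap'' of length at most $a$ (else we are done by Lemma~\ref{lem:l(S)+l(A)}), giving $\ell_A(S)\le |T|+2a+(1-\lambda)|S|\cdot a=\lambda|S|+(1-\lambda)|S|a+2a$.

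Finally I would derive the contradiction. Since $|S|<p/2$ and $a=\ell(A)\le(1-\delta/2)|2A|\le(1-\delta/2)(8+20\epsilon)|A|\le 36m\le 36\epsilon|S|\le 18\epsilon p$ by Lemma~\ref{lem:ell(A)}(ii), Lemma~\ref{lem:T} \eqref{eq:2a} and \eqref{eq:|A|}, the quantity $\ell_A(S)+\ell(A)$ is at most
$$\lambda|S|+(1-\lambda)|S|\,a+3a \le \tfrac p2\big(\lambda+(1-\lambda)a\big)+3a$$
and this is $<p$ provided $(1-\lambda)a$ is small enough — but $(1-\lambda)<1/3$ and $a\le 18\epsilon p$ is \emph{not} small compared to $\lambda|S|$ since $|S|$ can be as small as $p/35$, so the naive bound $(1-\lambda)|S|a$ is far too large. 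The \textbf{main obstacle}, and the step requiring real care, is precisely this: bounding the number of holes that $S\setminus T$ can create. The resolution is that $S\setminus T$ has fewer than $(1-\lambda)|S|<|S|/3$ elements but each hole it induces in $\ell_A(S)$ has length \emph{at most} $\ell(A)=a$, yet there are at most $|S\setminus T|+1$ such holes, so one gets $\ell_A(S)\le\ell(T)+a(|S\setminus T|+1)$. To make this work one needs $\lambda$ quite close to $1$ — which indeed it is, since from \eqref{eq:2a} and Lemma~\ref{lem:ell(A)}(i), $\lambda\ge(3-\delta)/(4+10\epsilon)$, and moreover one can bootstrap: having shown $T$ is in a short AP, the leftover $S\setminus T$ if it is large would itself force $|S+A|$ large via a second application of Lemma~\ref{lem:dens} with $B=S\setminus T$ and its own density, or alternatively one shows directly that $|S+A|\ge |T|+|2A|-1+\text{(growth from }S\setminus T)$ using that $(S\setminus T)+A$ and $T+A$ overlap little. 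Carrying out the arithmetic, with $\epsilon=10^{-4}$ and $\delta$ chosen up to $10^{-1}$ as permitted by Lemma~\ref{lem:rectabis}, the two inequalities are incompatible, so $\ell_A(S)+\ell(A)\le p$ and the result follows from Lemma~\ref{lem:l(S)+l(A)}.
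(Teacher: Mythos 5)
You've correctly assembled the right ingredients (Lemma~\ref{lem:T}, the bound on $\lambda$, the no-hole dichotomy via Lemma~\ref{lem:l(S)+l(A)}), and you're right to flag the bound on the leftover $S\setminus T$ as the crux. But your proof never actually crosses that gap. Your first attempt, $\ell_A(S)\le \ell(T)+a(|S\setminus T|+1)$, you correctly declare too weak; your fallback proposals (``second application of Lemma~\ref{lem:dens} with $B=S\setminus T$ and its own density'' or ``$(S\setminus T)+A$ and $T+A$ overlap little'') are sketches you never execute, and the final sentence ``carrying out the arithmetic, the two inequalities are incompatible'' is an assertion in place of a computation. As written, the argument is incomplete at precisely the point you identify as the hard one.

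The device you're missing: do not try to control $\ell_A(S)$ additively, and do not run the density lemma over all of $\mathbb{Z}/p\mathbb{Z}$. Instead, set $a=\ell(A)$, put the short progression containing $T$ (of length $\ell(T)\le|T|+2a$) at the ``end'' of the circle, and choose $t$ so that $p-\ell(T)=2ta+r$ with $0\le r<2a$; partition only the complementary interval of length $2ta$ into blocks $I_i=[(i-1)a,ia)$. Now apply Lemma~\ref{lem:dens} with $B=S$ and $C=A$. The crucial point is that $T\cap I=\emptyset$, so $|B\cap I|=|\overline{T}\cap I|\le(1-\lambda)|S|$ --- the full set $S$ still drives the growth through the no-hole condition, but only the small leftover $\overline{T}=S\setminus T$ appears in the error term $|B\cap I|/((2t-1)c)$. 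Pair this with the lower bound $\rho A\ge\frac{1}{\lambda(1-\delta/2)(4+10\epsilon)}$, obtained by chaining Lemma~\ref{lem:ell(A)}(ii) with \eqref{eq:2a} (note this bound degrades as $\lambda\to 1$, but the $(1-\lambda)$ factor in the error compensates); one then checks that for $\epsilon=10^{-4}$ and the admissible $\delta$ the resulting lower bound on $|S+A|$ exceeds $|S|+|A|+m$ for every $\lambda\in[0,1]$, contradicting \eqref{eq:ubsa}. This single, interval-restricted application of the density lemma, together with the $\lambda$-dependent density bound for $A$, is what makes the estimate close; the holes-count bookkeeping you attempted cannot be made to work.
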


\begin{proof}
Let $T$ be the set guaranteed by Lemma~\ref{lem:T}.
Let $\ot=S\setminus T$, which belongs to an
interval of length $p-\ell (T)$. Set $a=\ell(A)$.
Let us apply again Lemma~\ref{lem:dens}, this time with
$B=S$, $C=A$, and $t$ defined so as to have
$p-\ell(T)= 2ta+r$,
$0\le r<2a$. As before, set
$I=I_1\cup\cdots\cup I_{2t}$ with $I_i=[(i-1)a,ia)$.
Note that $T\cap I=\emptyset$, so that $\ot\cap I = S\cap I$.
Let us first suppose
\begin{equation}
  \label{eq:l(S)+l(A)}
  \ell_A(S)+\ell(A)> p
\end{equation}
which implies
$\ot\cap I_i\neq\emptyset$ for every $i=1,\ldots ,2t$, so that by
Lemma~\ref{lem:dens}, and denoting $\ot'=\ot\cap I= S\cap I$,
\begin{eqnarray}
 |S+A| & \geq & |S\cup [(S+A)\cap I]|\nonumber\\
       & \geq & |S| + (t-\frac 12)a\left(\rho
         A-\frac{|\ot'|}{(2t-1)a}\right). \label{eq:dens}
\end{eqnarray}

By definition of $t$ and by \eqref{eq:ls} we have
\begin{equation}
  \label{eq:-5a}
  (2t-1)a > p-\ell(T) - 3a \geq p -|T|-5a.
\end{equation}
Now Lemma~\ref{lem:ell(A)} (ii) and \eqref{eq:2a} give the following
upper bound on $a$
  $$a\leq |2A|\leq \lambda(4+10\epsilon)|A|\leq \lambda(4+10\epsilon)4m
    \leq \lambda(4+10\epsilon)2\epsilon p$$
so that we can write $-5a\geq -\lambda f(\epsilon)p$ with
$f(\epsilon) = 10(4+10\epsilon)\epsilon$.
Writing $|T|=\lambda |S| < \lambda p/2$, \eqref{eq:-5a} becomes
\begin{equation}
  \label{eq:(2t-1)a}
  (2t-1)a > p(1-\lambda(\frac 12 +f(\epsilon)))
\end{equation}

Next we write $|\ot '|\leq |\ot|=|S|-|T| = (1-\lambda)|S|$, so that
$|S|\leq p/2$ gives
\begin{equation}
  \label{eq:ot'}
  |\ot '|\leq \frac p2(1-\lambda)
\end{equation}

Finally we bound $\rho A$ from below.
Apply again Lemma~\ref{lem:ell(A)} (ii) and \eqref{eq:2a} to get
$$\ell(A) \leq (1-\delta/2)|2A|\leq
(1-\delta/2)\lambda(4+10\epsilon)(|A|-1),$$
so that we have
\begin{equation}
  \label{eq:rhoA}
  \rho A \geq \frac{1}{\lambda(1-\delta/2)(4+10\epsilon)}.
\end{equation}
Applying \eqref{eq:(2t-1)a}, \eqref{eq:ot'} and \eqref{eq:rhoA} to
\eqref{eq:dens} now gives
$$|S+A|>|S|+\frac p2
\left[
\frac{1-\lambda(\frac 12+f(\epsilon))}{\lambda(1-\delta/2)(4+10\epsilon)}
-\frac 12 (1-\lambda)
\right].
$$
Together with \eqref{eq:ubsa}, writing $|A|\leq 4m$ and $m\leq\epsilon
p/2$, we obtain
\begin{equation}
  \label{eq:contradict2}
  \frac{1-\lambda(\frac 12+f(\epsilon))}{\lambda(1-\delta/2)(4+10\epsilon)}
-\frac 12 (1-\lambda) -5\epsilon <0.
\end{equation}
Now there exists $\epsilon_\delta>5.8\; 10^{-3}>0$ such that for every
$\epsilon\leq\epsilon_\delta$, the lefthandside of
\eqref{eq:contradict2} is strictly positive for every value of
$\lambda\in [0,1]$.
In that case \eqref{eq:contradict2} can not hold and we obtain a
contradiction with the hypothesis \eqref{eq:l(S)+l(A)}. Therefore
Theorem \ref{thm:ls2} implies the result.
\end{proof}

{\bf Numerical values:} As it has been shown in the proofs Theorem
\ref{thm:main} holds with $\epsilon = 10^{-4}$. As for the value of $p_0$,
we use $m\ge 6$ in Section 4, so in order to cover smaller
values of $m$, the prime $p$ should satisfy the condition in Lemma
\ref{lem:rectsasmall} that $\log_4 p\ge 6m+11\ge 47$ which is
equivalent to $p\geq 2^{94}$.
We have tried to strike a balance between readability and obtaining
the best possible constants. These values of $\epsilon$ and $p_0$ 
are not the best possible, but they give a reasonable account of what
can be achieved through the methods of this paper.

\end{document}